\documentclass[a4paper,10pt]{article}
\usepackage{latexsym,amssymb,amsfonts,amsmath,amsthm,mathrsfs,amstext,color,graphicx,times}
\usepackage[mathscr]{euscript}
\usepackage{indentfirst}
\usepackage{cite}
\usepackage{wrapfig}
\usepackage[all]{xy}
\usepackage{tikz}
\usepackage{hyperref}
\usepackage{color}
\usetikzlibrary{patterns}

\newcommand{\R}{\mathbb{R}}
\newcommand{\N}{\mathbb{N}}

\newtheorem{theorem}{Theorem}[section]
\newtheorem{corollary}[theorem]{Corollary}
\newtheorem{proposition}{Proposition}[section]
\newtheorem{definition}{Definition}[section]
\newtheorem{remark}{Remark}
\newtheorem{lemma}{Lemma}[section]
\newtheorem{example}{Example}[section]
\newcommand{\tos}{\rightrightarrows} 
\DeclareMathOperator{\co}{co}
\DeclareMathOperator{\dom}{dom}

\DeclareMathOperator{\Int}{int}
\DeclareMathOperator{\argmin}{argmin}
\DeclareMathOperator{\qep}{QEP}
\DeclareMathOperator{\ep}{EP}
\DeclareMathOperator{\vip}{VIP}
\DeclareMathOperator{\mqep}{MQEP}
\DeclareMathOperator{\mep}{MEP}

\DeclareMathOperator{\fix}{Fix}



\title{Finite intersection property for bifunctions and existence for quasi-equilibrium problems}
\author{John Cotrina
	\thanks{Universidad del Pac\'ifico, Lima, Per\'u. Email: 
	\texttt{ cotrina\_je@up.edu.pe}}
	\and Anton Svensson 
	\thanks{Mathematics Department, Universidad de Chile, Santiago, Chile and Lab. PROMES UPR CNRS 8521, University of Perpignan, France. Email: \texttt{asvensson@dim.uchile.cl}}
}	

\begin{document}
\maketitle
\begin{abstract}
The ``finite intersection property" for bifunctions is introduced and  its relationship with  generalized monotonicity properties is studied. Some results concerning existence of solution for  (quasi-)equilibrium problems  are established and several results well-known in the literature are recovered. Furthermore, two applications are considered.

\bigskip

\noindent{\bf Keywords:} Quasi-equilibrium problem, Set-valued map, Generalized monotonicity, Finite intersection property
\bigskip

\noindent{{\bf MSC (2010)}: 47J20, 49J35, 90C37} 
\end{abstract}

\section{Introduction}

Given a non-empty subset $C$ of a topological space $X$ and a bifunction $f:X\times X\to\R$, the \emph{equilibrium problem}, see Blum and Oettli \cite{OB93}, is the problem of finding:
\begin{align}\label{EP}
x\in C\mbox{ such that }f(x,y) \geq0\mbox{ for all }y\in C.\tag{EP}
\end{align}

Problem \eqref{EP} has been extensively studied in recent years and several existence results have been developed under generalized monotonicity (see \cite{BP01,BP05,BS96,OB93,castellani2012,Chaldi2000,JCYG,FFB2001,Iusem2009,IUSEM2003621,PQK-NHQ-19} and the  references therein). A recurrent theme in the analysis of an equilibrium problem is its relation with the so-called \emph{Minty equilibrium problem}, which corresponds to a sort of dual formulation of the equilibrium problem and consists of finding:
\begin{align}\label{MEP}
x\in C\mbox{ such that }f(y,x) \leq0\mbox{ for all }y\in C.\tag{MEP}
\end{align}

It is well-known that every solution of \eqref{EP} is a solution of \eqref{MEP}, provided that the bifunction $f$ is pseudo-monotone. Recently, it was showed in \cite{JC-JZ-2018} that
the converse holds  under pseudo-monotonicity of  $-f$.

A generalization of \eqref{EP} is the so-called \emph{quasi-equilibrium problem}, in which the constraint set depends on the currently analysed point.
More precisely, given a bifunction $f:X\times X\to\R$ and a  set-valued map $K:C\tos C$, the quasi-equilibrium problem consists of finding:
\begin{align}\label{QEP}
x\in K(x)\mbox{ such that }f(x,y)\geq0 \mbox{ for all }y\in K(x).\tag{QEP}
\end{align}
The associated \emph{Minty quasi-equilibrium problem} consists of finding:
\begin{align}\label{MQEP}
x\in K(x)\mbox{ such that }f(y,x)\leq0 \mbox{ for all }y\in K(x).\tag{MQEP}
\end{align}

In \cite{ACI,JC-JZ-2018} some existence results for quasi-equilibrium problems were shown under generalized monotonicity. 

Our aim in this work is to establish sufficient conditions for the existence of solutions for the quasi-equilibrium problem. These conditions are based on some new concepts that we introduce related to bifunctions, the so-called finite intersection properties, and which are inspired from \cite{Nessah-Tian}. Additionally, we discuss the relation of these conditions with generalized monotonicity properties.

This paper is organized as follows. In Section \ref{preliminaries} we recall some definitions about generalized convexity, generalized monotonicity for bifunctions and continuity for set-valued maps, also we state the well-known Kakutani's fixed point theorem. 
In Section \ref{FIP} the concept of finite intersection property for bifunctions is introduced and its relation with generalized monotonicity properties are studied. 
In Section \ref{main1}, it is shown that the Minty equilibrium problem derived from a bifunction is solvable provided that this bifunction has the finite intersection property.
Our main result  is established in Section \ref{main2}.
Finally, in Section \ref{applications} we consider as applications the variational inequality problem and the generalized Nash equilibrium problem.

\section{Preliminaries and notations}\label{preliminaries}

Assume $X$ is a (real) topological vector space. Let $S$ be a subset of  $X$. The convex hull and the interior  of $S$ will be denoted by $\co(S)$ and $\Int(S)$, respectively.  

Let us recall some classical definitions of generalized convexity. A function $h:X\to\R\cup\{+\infty\}$ is said to be \emph{quasi-convex} if, for any $\lambda\in\R$, its sublevel set $S_\lambda[h]$ is convex, where
\[
S_\lambda[h]:=\{x\in X:~h(x)\leq \lambda\},
\]
and $h$ is said to be \emph{semi-strictly quasi-convex} if, it is quasi-convex and, for any $x,y\in X$ such that $h(x)\neq h(y)$ 
the following holds
\[
h(tx+(1-t)y)< \max\{h(x),h(y)\}\mbox{ for all }t\in]0,1[.
\]
Roughly speaking, a semi-strictly quasi-convex function is a quasi-convex function which does not admit a flat part, except possibly $\argmin_X f$.

Now we recall some notions of generalized monotonicity for bifunctions.
Given a subset $C$ of $X$, a bifunction $f:X\times X\to\R$ is said to be:
\begin{itemize}
\item \emph{cyclically monotone} on  $C$ if, for all $n\in\N$ and all $x_0,x_1,\dots,x_n\in C$ the following holds
\[
\sum_{i=1}^n f(x_i,x_{i+1})\leq0,
\]
with $x_{n+1}=x_0$;
\item \emph{cyclically quasi-monotone} on $C$ if, for all $n\in\N$ and all $x_0,x_1,\dots,x_n\in C$, there exists $i\in\{0,1,\dots,n\}$ such that
\[
f(x_i,x_{i+1})\leq0,
\]
where $x_{n+1}=x_0$;

\item \emph{monotone} on $C$ if, for all $x,y\in C$, 
\[
f(x,y)+f(y,x)\leq0;
\]
\item \emph{pseudo-monotone} on $C$ if, for all $x,y\in C$, 
\[
f(x,y)\geq 0\Rightarrow f(y,x)\leq0;
\]
\item \emph{quasi-monotone} on $C$  if, for all $x,y\in C$, 
\[
f(x,y)>0\Rightarrow f(y,x)\leq0.
\]
\end{itemize}

The following diagram shows the relationship between the generalized monotonicity properties that we have considered.

\vspace{.2cm}

\begin{center}
\begin{tabular}{ccccc}
cyclic monotonicity &$\Rightarrow$& monotonicity&$\Rightarrow$&pseudo-monotonicity\\
$\Downarrow$&&&&$\Downarrow$\\
cyclic quasi-monotonicity &&$\Rightarrow$& &quasi-monotonicity
\end{tabular}
\end{center}

\vspace{.2cm}

We give now a characterization cyclic quasi-monotonicity that will be useful in the next section.

\begin{proposition}\label{cyclic-q-fip}
	Let $C$ be a non-empty subset of $X$ and $f:X\times X\to\R$ be a bifunction. Then, $f$ is cyclic quasi-monotone on $C$ if and only if for any finite and non-empty subset $A$ of $C$, there exists $x\in A$ such that
	\[
	\max_{a\in A}f(a,x)\leq 0.
	\]
\end{proposition}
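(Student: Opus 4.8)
The plan is to establish the two implications separately. The equivalence compares cyclic quasi-monotonicity, which is a statement about \emph{all} finite cycles in $C$, with the assertion that every finite subset of $C$ contains a distinguished ``sink'' element $x$ for which $f(a,x)\le 0$ holds simultaneously for all $a$ in the subset. The implication from the sink property to cyclic quasi-monotonicity is quick, while the converse, which must manufacture a sink out of the cycle condition, is where the work lies.

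For the easy direction, I would assume the stated property and verify cyclic quasi-monotonicity directly. Given $n\in\N$ and a cycle $x_0,x_1,\dots,x_n\in C$ with $x_{n+1}=x_0$, apply the hypothesis to the finite non-empty set $A=\{x_0,x_1,\dots,x_n\}$ to obtain some $x\in A$, say $x=x_j$, with $f(a,x_j)\le 0$ for every $a\in A$. Choosing $a$ to be the predecessor of $x_j$ in the cycle, namely setting $i=j-1$ if $j\ge 1$ and $i=n$ if $j=0$ so that $x_{i+1}=x_j$, gives $f(x_i,x_{i+1})\le 0$, which is exactly the index required by the definition of cyclic quasi-monotonicity.

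For the converse, I would argue by contradiction. Suppose $f$ is cyclic quasi-monotone but the property fails for some finite non-empty $A\subseteq C$; then for every $x\in A$ there exists $a\in A$ with $f(a,x)>0$. Starting from an arbitrary $y_0\in A$ and repeatedly invoking this to pick a predecessor, I would build a sequence $y_0,y_1,y_2,\dots\in A$ satisfying $f(y_{k+1},y_k)>0$ for all $k$. Since $A$ is finite, among $y_0,\dots,y_{|A|}$ the pigeonhole principle forces a repetition $y_p=y_q$ with $p<q$. Re-indexing $z_i:=y_{q-i}$ for $i=0,\dots,q-p$ yields a cycle with $z_{q-p}=y_p=y_q=z_0$ along which $f(z_i,z_{i+1})=f(y_{q-i},y_{q-i-1})>0$ for every $i$, contradicting the existence (guaranteed by cyclic quasi-monotonicity) of an index with $f(z_i,z_{i+1})\le 0$.

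The main obstacle I anticipate is bookkeeping in this second direction: the property is phrased with $a$ ranging over $A$ against a fixed second argument $x$, whereas cyclic quasi-monotonicity reads along a directed cycle $f(x_i,x_{i+1})$, so the orientation must be reversed carefully when passing from the sequence $(y_k)$ to the cycle $(z_i)$. I would also take care that the degenerate case $y_{k+1}=y_k$, which produces a one-term cycle (a self-loop $f(z_0,z_0)>0$), still contradicts cyclic quasi-monotonicity, and that the sequence is genuinely well-defined by the failure of the property at every point of $A$.
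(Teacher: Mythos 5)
Your proposal is correct and follows essentially the same route as the paper's proof: the easy direction by applying the sink property to the vertex set of the cycle and reading off the predecessor index, and the hard direction by contradiction, building a sequence of ``predecessors'' with strictly positive values, extracting a repetition by pigeonhole, and reversing it into a cycle that violates cyclic quasi-monotonicity. Your index bookkeeping (including the self-loop degeneracy) is in fact slightly more careful than the paper's.
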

\begin{proof}
	Let us first prove the direct implication. Reasoning by contradiction, suppose that there exists $A=\{x_1,x_2,\dots,x_n\}\subset C$ such that $(\bigcap_{i=1}^n F_{x_i})\cap A=\emptyset$, where $F_{x_i}=\{y\in C:~f(x_i,y)\leq0\}$. This is equivalent to
	\begin{align}\label{cqm-fip}
	\left(\bigcup_{i=1}^n F_{x_i}^c\right)\cup A^c=C.
	\end{align}
	Set $x_{i(1)}=x_1$, equality \eqref{cqm-fip} implies that there exists $x_j$ with $x_j\neq x_1$ such that $x_1\in F_j^c$, that means $f(x_j,x_1)>0$. We set $x_{i(2)}=x_j$ and apply the equality \eqref{cqm-fip} again. Continuing in this way, we define a sequence $(x_{i(n)})_{n\in\N}$ such that
	\begin{align}\label{cqp-fip2}
	f(x_{i(k+1)},x_{i(k)})>0
	\end{align}
	for all $k\in\N$.
	
	Since the set $\{x_1,x_2,\dots,x_n\}$ is finite, there exist $m,k\in\N$ with $m<k$ such that $x_{i(k+1)}=x_{i(m)}$. We now consider the points
	\[
	\hat{x}_1=x_{i(m)},~\hat{x}_2=x_{i(k)},~\hat{x}_3=x_{i(k-1)},\dots, \hat{x}_{k+1-m}=x_{i(m+1)}
	\]
	which, due to the inequality \eqref{cqp-fip2}, satisfy
	\[
	f(\hat{x}_j,\hat{x}_{j+1})>0
	\]
	for all $j=1,\dots,k+1-m$, with $\hat{x}_{k+2-m}=\hat{x}_1$. This means that $f$ is not cyclic quasi-monotone and we get a contradiction. 

	Now we prove the converse implication. Given points $x_1,...,x_{n+1}\in C$ with $x_{n+1}=x_1$, consider the finite and non-empty set $A:=\{x_i: i=1,...,n\}$. We know there exists $x\in A$ such that $f(a,x)\leq 0$ for all $a\in A$. Of course $x=x_{i}$ for some $i=1,...,n$ and in particular we have $f(x_i,x_{i+1})\leq 0$.
\end{proof}

A bifunction $f$ is said to be \emph{properly quasi-monotone} on a convex subset $C$ of $X$ if for all finite and non-empty subset $A$ of $C$, and all $x\in \co(A)$
	\[
	\min_{a\in A}f(a,x)\leq0.
	\]

Under quasi-convexity in the second argument, a pseudo-monotone bifunction is also properly quasi-monotone. 
But in general proper quasi-monotonicity does not even imply quasi-monotonicity (see \cite{BP01}).

Another important concept that we will consider in this paper is the upper sign property for bifunctions. A bifunction $f:X\times X\to\R$ is said to have the 
\emph{upper sign property} on a convex subset $C$ of $X$ if, for all $x,y\in C$ 
the following implication holds:
\begin{align}\label{upper}
\bigl(
f(x_t,x)\leq0,~
\forall~ t\in\,]0,1[~
\bigr)
\Rightarrow ~ f(x,y) \geq0.
\end{align}

Recently in \cite{JC-JZ-2018}, the authors showed that under suitable assumptions the upper sign property of $f$ is equivalent to the pseudo-monotonicity of $-f$. 

We finish this section by recalling the Kakutani Fixed Point Theorem (see \cite{GD}), on which our main results are based. 
Assume that $X$ and $Y$ are topological spaces and consider a set-valued map $S:X\tos Y$. The map $S$ is said to be:
\begin{itemize}
 \item \emph{closed} when for any net $(x_i,y_i)\in{i\in I}$ in the graph of $S$ such that $(x_i,y_i)$ converges to $(x_0,y_0)$, it holds $y_0\in S(x_0)$;

 \item \emph{lower semicontinuous (lsc)} when for any $x_0$, and any open set $V$ such that $S(x_0)\cap V\neq\emptyset$, there exists a neighborhood $U$ of $x_0$ such that $S(x)\cap V\neq\emptyset$ for all $x\in U$;
 \item \emph{upper semicontinuous (usc)} when for any $x_0$ and any neighborhood $V$ of $S(x_0)$, there exists a neighborhood $U$ of $x_0$ such that $S(U) \subset V$.
\end{itemize}
Recall also that if $X=Y$ we denote by $\fix(S):=\{x\in X: x\in S(x)\}$ the set of \emph{fixed points} of $S$.

\begin{theorem}[Kakutani]\label{Kakutani}
Assume $X$ is locally convex space and let $C\subset X$ be a non-empty, compact and convex set, and $S: C \tos C$ be a set-valued map. If $S$ is usc and has non-empty, closed and convex images, then $\fix(S)$ is non-empty.
\end{theorem}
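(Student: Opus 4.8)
The statement is the locally convex (Kakutani--Fan--Glicksberg) form of Kakutani's theorem, and the plan is to reduce it, via an approximation argument, to Brouwer's fixed point theorem. The one fact I would use repeatedly is that, since $C$ is compact and every image $S(x)$ is closed, upper semicontinuity of $S$ is equivalent to its graph being closed in $C\times C$ (in the sense of nets recalled above); this is the device that converts approximate fixed points into genuine ones.

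The heart of the argument is a construction of \emph{approximate fixed points}, one for each convex symmetric neighborhood $V$ of the origin. Fixing such a $V$, compactness of $C$ yields finitely many points $x_1,\dots,x_k\in C$ with $C\subset\bigcup_i (x_i+V)$. I would take a continuous partition of unity $(p_i)$ subordinate to this cover and choose $y_i\in S(x_i)$ for each $i$. Setting $D:=\co\{y_1,\dots,y_k\}\subset C$, the map $g_V(x):=\sum_i p_i(x)\,y_i$ is a continuous self-map of the finite-dimensional compact convex set $D$, so Brouwer's theorem provides $x_V\in D$ with $g_V(x_V)=x_V$. By construction $x_V$ is a convex combination of those $y_i$ with $p_i(x_V)>0$, and for each such index the base point satisfies $x_i\in x_V+V$ (using that $V$ is symmetric) while $y_i\in S(x_i)$; thus $x_V$ belongs to the convex hull of the images $S(x_i)$ taken over base points within $V$ of $x_V$.

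To finish, I would regard $\{x_V\}$ as a net indexed by the convex symmetric neighborhoods of $0$ directed by reverse inclusion. Compactness of $C$ gives a cluster point $x_0$. Fixing an arbitrary convex neighborhood $W$ of $0$, upper semicontinuity yields $S(x)\subset S(x_0)+W$ for $x$ near $x_0$; hence for $V$ small and $x_V$ close to $x_0$, every relevant base point $x_i$ is near $x_0$, so $x_V$ lies in $\co(S(x_0)+W)=S(x_0)+W$, which is convex because both $S(x_0)$ and $W$ are. Since $x_0$ is clustered by such $x_V$, it lies in $\overline{S(x_0)+W}$; intersecting over all $W$ and using local convexity together with the closedness and convexity of $S(x_0)$ gives $\bigcap_W \overline{S(x_0)+W}=S(x_0)$, whence $x_0\in S(x_0)$ and $\fix(S)\neq\emptyset$.

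The main obstacle is keeping the finite-dimensional surrogate honest: the partition-of-unity construction only produces \emph{approximate} selections of $S$, so the convexity of the images $S(x)$ is exactly what allows the convex combination $x_V$ to be controlled and then pushed to the limit. Local convexity---the abundance of convex neighborhoods of the origin---is indispensable both for building $g_V$ with values in a controlled convex set and for the final intersection identity $\bigcap_W \overline{S(x_0)+W}=S(x_0)$. Once the net of approximate fixed points is in hand, its convergence to a true fixed point is a routine consequence of compactness and the closed-graph reformulation of upper semicontinuity.
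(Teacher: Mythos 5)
The paper does not prove this statement at all: it is quoted as the classical Kakutani--Fan--Glicksberg theorem with a citation to Granas and Dugundji \cite{GD}, so there is no in-paper argument to compare against. Your proposal is the standard textbook proof of that theorem --- approximate fixed points built from a finite subcover and a subordinate partition of unity, Brouwer's theorem on the finite-dimensional convex hull $\co\{y_1,\dots,y_k\}$, and a limit passage along the net of neighborhoods --- and it is correct. The key steps all check out: $g_V$ maps $D$ into itself because its values are convex combinations of the $y_i$; the symmetric choice of $V$ gives $x_i\in x_V+V$ for every index with $p_i(x_V)>0$; upper semicontinuity applied to the open neighborhood $S(x_0)+W$ of $S(x_0)$ controls the $y_i$ for $V$ small; convexity of $S(x_0)$ and of $W$ makes $S(x_0)+W$ convex so the convex combination $x_V$ stays inside it; and $\bigcap_W \overline{S(x_0)+W}=\overline{S(x_0)}=S(x_0)$ by closedness of the images. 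Two small housekeeping points you are implicitly using and should flag: the space must be Hausdorff (for the partition of unity on the compact set $C$, for identifying the finite-dimensional set $D$ with a compact convex subset of some $\R^m$ where Brouwer applies, and for the final intersection identity), and the opening remark about the closed-graph characterization of usc is never actually invoked --- your limit argument uses the neighborhood definition of usc directly, so that sentence could be dropped without loss.
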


\section{The finite intersection property}\label{FIP}

In this section we introduce the notion of finite intersection property and two of its variants, for bifunctions, and we discuss their relation with the generalized monotonicity properties, namely proper quasi-monotonicity and quasi-monotonicity  
in Propositions \ref{proper-fip} and \ref{fip-star-quasi}, 
 respectively.

\begin{definition}
Let $f:X\times X\to\R$ be a bifunction. The bifunction $f$ is said to have: 
\begin{itemize}
\item The \emph{finite intersection property} (fip) on  $C$ a subset of $X$ if, for any  finite and non-empty subset $A$ of $C$, there exists $x\in C$ such that
	\[
	\max_{a\in A}f(a,x)\leq0.
	\]
\item The \emph{star finite intersection property} (fip$^*$) on $C$ a convex subset of $X$  if, for any  finite and non-empty subset $A$ of $C$, there exists $x\in \co(A)$ such that
	\[
	\max_{a\in A}f(a,x)\leq 0.
	\]

\end{itemize}
\end{definition}
Nessah and Tian, in \cite{Nessah-Tian}, introduced a condition called the $\alpha$-locally dominatedness of a bifunction, which corresponds in the case of $\alpha=0$ to a bifunction with the fip by switching the roles of the variables. They discussed the relation of this property with the finite intersection property for families of sets. In fact, for each $x\in C$ we define the set 
\begin{align}\label{FIP-sets}
 F_x:=\{y\in C:~f(x,y)\leq0\}.
\end{align}
Clearly, $f$ has the fip on $C$ 
if and only if, the family of sets $\{F_x\}_{x\in C}$ has the finite intersection property.
Similarly, $f$ has the fip$^*$ on $C$ if and only if for any non-empty and finite subset $A$ of $C$ it holds that
\[
\left(\bigcap_{a\in A}F_a\right)\cap \co(A)\neq\emptyset.
\]

Observe that under fip$^*$ by taking $A_x=\{x\}$ we have $f(x,x)\leq 0$ for every $x\in X$, while fip does not guarantee this in general.

\begin{remark}
	From Proposition \ref{cyclic-q-fip} it is clear that cyclic quasi-monotonicity implies fip, and moreover, if $C$ is a convex set then cyclic quasi-monotonicity implies fip$^*$ and fip$^*$ implies fip. 
	The converses to these implications are in general not true, as shown by the following two simple examples.
\end{remark}

\begin{example}
The bifunction $f(x,y):=xy$,\, for $x,y\in[0,1]$ has the fip. However, $f$ does not have the fip$^*$ on $[0,1]$. Indeed, for $A=\{1\}$ we have $\max_{a\in A}f(a,x)=f(1,1)=1>0$, for all $x\in \co(\{1\})$.
\end{example}

\begin{example}\label{fipstar-but-not-fip-double}
	Let $f:[0,1]\times [0,1]\to \R$ be defined as 
\[	
	f(x,y):=\left\{\begin{array}{ll}
	0,& \text{ if }\ |x-y|\leq 1/2 \\
	1, & \text{ otherwise.} 
	\end{array}\right. 
\]	
	Let us see that $f$ has the fip$^*$. Consider a non-empty and finite set $A\subset [0,1]$. 
	If $\operatorname*{diam} A= \max_{a,b\in A}|a-b|\leq 1/2$, then by taking any point $x\in A$ we obtain $\max_{a\in A}f(a,x)=0$. Otherwise, if $\operatorname*{diam} A= \max_{a,b\in A}|a-b|> 1/2$, then there exist $a_0,a_1\in A$ such that $a_0<1/2+a_1$ and therefore $1/2\in [a_0,a_1]\subset \co A$. So taking $x=1/2$ we have again that $\max_{a\in A}f(a,x)=0.$
	Thus $f$ has the fip$^*$ on $[0,1]$.
	But we observe that $f$ is not cyclic quasi-monotone on $[0,1]$, in fact, not even quasi-monotone, since $f(1,0)=f(0,1)>0$. 
\end{example}

Now we present a simple case of bifunctions with the fip$^{*}$.
\begin{example}\label{Qopt}
Let $h:X\to\R$ be a function and $C$ be a subset of $X$. Consider the bifunction $f:X\times X\to\R$ defined by
\begin{align}\label{Opt-EP}
f(x,y):=h(y)-h(x).
\end{align}
It is clear that $f$ is cyclic monotone, thus cyclic quasi-monotone and due to Proposition \ref{cyclic-q-fip}  it satisfies the finite intersection property. Moreover, if $C$ is convex, then again by Proposition \ref{cyclic-q-fip} we deduce that $f$ satisfies fip$^*$ on $C$.
\end{example}

Following the proof of Proposition 2.1 in \cite{Nessah-Tian} we  will show that a properly quasi-monotone bifunction has the fip$^*$ whenever it is lower semi-continuous on its second argument.

\begin{proposition}\label{proper-fip}
Let  $C$ be a convex and non-empty subset of $X$ (normed space) and $f:X\times X\to\R$  be a bifunction such that for each $x\in C$ the function $f(x,\cdot)$ is lower semi-continuous. If $f$ is properly quasi-monotone on $C$, then   it has the fip$^*$ on $C$. 
\end{proposition}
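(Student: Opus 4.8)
The plan is to fix an arbitrary finite and non-empty subset $A=\{a_1,\dots,a_n\}$ of $C$ and to verify the equivalent formulation of fip$^*$ recalled after the definition, namely that $\left(\bigcap_{a\in A}F_a\right)\cap\co(A)\neq\emptyset$, where $F_a=\{y\in C:f(a,y)\leq0\}$. Note first that $\co(A)\subseteq C$ since $C$ is convex, so it makes sense to restrict attention to the set $\co(A)$, which is compact because it is the convex hull of finitely many points and hence lies in a finite-dimensional subspace. For each $i$ I would set
\[
G_i:=F_{a_i}\cap\co(A)=\{x\in\co(A):f(a_i,x)\leq0\}.
\]
Since $f(a_i,\cdot)$ is lower semi-continuous, its sublevel set $S_0[f(a_i,\cdot)]$ is closed, and therefore each $G_i$ is a closed, hence compact, subset of $\co(A)$. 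The goal then reduces to showing $\bigcap_{i=1}^n G_i\neq\emptyset$.

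The key step is to recognise that proper quasi-monotonicity is exactly the Knaster--Kuratowski--Mazurkiewicz (KKM) covering condition for the family $\{G_i\}$. Indeed, take any non-empty index set $I\subseteq\{1,\dots,n\}$ and any $x\in\co\{a_i:i\in I\}$. Applying proper quasi-monotonicity to the finite set $\{a_i:i\in I\}\subseteq C$ and to the point $x$ of its convex hull yields some $i\in I$ with $f(a_i,x)\leq0$, that is $x\in G_i$. Hence $\co\{a_i:i\in I\}\subseteq\bigcup_{i\in I}G_i$, which is precisely the KKM condition.

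With the covering condition and closedness in hand, I would invoke the KKM lemma to conclude $\bigcap_{i=1}^n G_i\neq\emptyset$; any $x$ in this intersection lies in $\co(A)$ and satisfies $f(a_i,x)\leq0$ for every $i$, i.e. $\max_{a\in A}f(a,x)\leq0$, which is fip$^*$. The one point requiring care --- and the main technical obstacle --- is that the vertices $a_1,\dots,a_n$ need not be affinely independent, so $\co(A)$ is not literally a simplex. I would handle this by pulling everything back to the standard $(n-1)$-simplex $\Delta^{n-1}$ with vertices $e_1,\dots,e_n$ through the continuous affine surjection $\phi:\Delta^{n-1}\to\co(A)$, $\phi\!\left(\sum_i\lambda_i e_i\right)=\sum_i\lambda_i a_i$, and setting $\widetilde G_i:=\phi^{-1}(G_i)$. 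Continuity of $\phi$ makes each $\widetilde G_i$ closed, the covering condition transfers verbatim (if $u\in\co\{e_i:i\in I\}$ then $\phi(u)\in\co\{a_i:i\in I\}\subseteq\bigcup_{i\in I}G_i$, so $u\in\bigcup_{i\in I}\widetilde G_i$), and the classical simplex version of the KKM lemma provides a point in $\bigcap_i\widetilde G_i$ whose image under $\phi$ lies in $\bigcap_i G_i$, completing the argument.
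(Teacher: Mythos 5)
Your proof is correct, but it takes a different route from the paper's. The paper argues by contradiction: assuming $\bigcap_{i}F_{x_i}\cap\co(A)=\emptyset$, it builds the continuous function $g(x)=\sum_i d(x,F_{x_i})>0$ on $K=\co(A)$, forms the partition-of-unity map $h(x)=\sum_i\frac{d(x,F_{x_i})}{g(x)}x_i$ from $K$ to $K$, extracts a fixed point $\bar x$ via the Schauder--Tychonoff theorem, and shows that $\bar x$ violates proper quasi-monotonicity on the set of indices $i$ with $d(\bar x,F_{x_i})>0$ (this is the argument of Proposition 2.1 in the Nessah--Tian reference, and it is why the statement carries the ``normed space'' hypothesis: the distance function is used explicitly). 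You instead observe that proper quasi-monotonicity is \emph{verbatim} the KKM covering condition for the sets $G_i=F_{a_i}\cap\co(A)$, check closedness from lower semicontinuity, and invoke the KKM lemma directly, handling the possible affine dependence of the $a_i$ by pulling back to the standard simplex --- a point genuinely worth the care you give it. The two arguments are equivalent in depth (KKM and Brouwer are interderivable, and the paper's proof is essentially an inlined proof of KKM specialized to this covering), but yours is cleaner once KKM is taken as known, makes the conceptual content of proper quasi-monotonicity transparent, and dispenses with the metric entirely, so it establishes the result in any Hausdorff topological vector space in which the sublevel sets $S_0[f(x,\cdot)]$ are closed --- slightly more general than the stated hypothesis. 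The paper's route buys self-containedness modulo a standard fixed point theorem and stays closer to its cited source.
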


\begin{proof}
Let us assume by contradiction that $f$ does not have the fip$^*$. So, there exists $\{x_1,...,x_m\}\subset C$ such that for any $x\in K:=\co(\{x_1,...,x_m\})$, we have $$\max_{i=1,...,m}f(x_i,x)>0.$$
By means of the sets $F_{x_i}:=\left\{y\in K: f(x_i,y)\leq 0\right\}$, this can be stated equivalently as 
$\bigcap_{i=1}^m F_{x_i}=\emptyset$. Thus, since the sets $F_{x_i}$ are closed (due to the lower semicontinuity of $f$ in its second argument) then the function $g:K\to\R_+$ defined by 
\[
g(x):=\sum_{i=1}^m d(x,F_{x_i}),
\]
satisfies $g(x)>0$ for all $x\in K$, and is continuous. Further, the function $h:K\to K$ defined as
\[
h(x):=\sum_{i=1}^m\frac{d(x,F_{x_i})}{g(x)}x_i,
\]
is continuous too. By Schauder-Tychonoff Fixed Point Theorem we deduce that there exists $\bar{x}\in K$ such that $h(\bar{x})=\bar{x}$. Consider the set of indices 
\[
J:=\left\{i=1,...,m: d(\bar{x},F_{x_i})>0\right\}
\]
which is non-empty by a simple argument similar to the one used to prove that $g(x)>0$. Then, $\bar{x}\in \co(\{x_i: i\in J\})$ we have that $\min_{i\in J} f(x_i,\bar{x})>0$,
but this contradicts the proper quasi-monotonicity of $f$ applied for the finite set of points $\{x_i\}_{i\in J}$ and its convex combination $\bar{x}$.
\end{proof}

The previous result is also true if we only consider the closeness assumption of the sublevel sets $S_\lambda[f(x,\cdot)]$, with $\lambda=0$ for each $x$.

Analogous to Proposition 1.2 in \cite{BP01}, we will show that fip$^*$ implies quasi-monotonicity under suitable assumptions.

\begin{proposition}\label{fip-star-quasi}
Let $f:X\times X\to\R$ be a bifunction such that $-f$ is semi-strictly quasi-convex in its second argument and $f$ vanishes on the diagonal of $X\times X$. If $f$ has the fip$^*$ on $X$, then it is quasi-monotone on $X$.
\end{proposition}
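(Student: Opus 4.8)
The plan is to argue by contradiction. Suppose $f$ is not quasi-monotone on $X$, so there exist $x,y\in X$ with $f(x,y)>0$ and $f(y,x)>0$. I would then apply the fip$^*$ to the two-point set $A=\{x,y\}$: since $\co(A)$ is the segment $[x,y]$, this produces a point $z=tx+(1-t)y$ with $t\in[0,1]$ satisfying $\max\{f(x,z),f(y,z)\}\leq0$, that is, $f(x,z)\leq0$ and $f(y,z)\leq0$.

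Next I would rule out the two endpoints of the segment. If $z=x$, then $f(y,z)=f(y,x)>0$, contradicting $f(y,z)\leq0$; symmetrically, if $z=y$, then $f(x,z)=f(x,y)>0$, contradicting $f(x,z)\leq0$. Hence $t\in\,]0,1[$ and $z$ lies strictly between $x$ and $y$. This step is where the hypothesis $f(y,x)>0$ (and not merely $f(x,y)>0$) is genuinely used, since it is what forces $z$ away from the endpoint $x$.

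Finally I would unpack the hypothesis that $-f$ is semi-strictly quasi-convex in its second argument, which amounts to saying that $f(x,\cdot)$ is semi-strictly quasi-concave; concretely, for $u,v$ with $f(x,u)\neq f(x,v)$ and $s\in\,]0,1[$ one has $f(x,su+(1-s)v)>\min\{f(x,u),f(x,v)\}$. Since $f$ vanishes on the diagonal we have $f(x,x)=0$, while $f(x,y)>0$, so these two values differ. Applying semi-strict quasi-concavity along the segment $[x,y]$ at the interior point $z$ then yields $f(x,z)>\min\{f(x,x),f(x,y)\}=0$, which contradicts $f(x,z)\leq0$ obtained from the fip$^*$. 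This contradiction completes the proof.

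The argument is short, and the one place that needs care is the translation of the hypothesis from $-f$ to $f$: semi-strict quasi-convexity of $-f(x,\cdot)$ becomes semi-strict quasi-concavity of $f(x,\cdot)$, so it delivers a strict \emph{lower} bound $f(x,z)>0$ rather than an upper bound, and it is precisely this strict inequality that clashes with the weak inequality $f(x,z)\leq0$ supplied by the fip$^*$. The accompanying bookkeeping to exclude both endpoints of the segment is what ensures the strict inequality is actually available.
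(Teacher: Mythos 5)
Your proof is correct and is essentially the paper's own argument rearranged as a contradiction: both apply fip$^*$ to the two-point set $\{x,y\}$ and use semi-strict quasi-concavity of $f(x,\cdot)$ together with $f(x,x)=0$ to force the fip$^*$ point to the endpoint $x$, yielding $f(y,x)\leq 0$. Your version merely spells out the endpoint bookkeeping that the paper leaves implicit.
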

\begin{proof}
Let $x,y\in X$ such that $f(x,y)>0$. By semi-strict quasi-convexity of $-f(x,\cdot)$ we obtain
\[
f(x,tx+(1-t)y)>0,
\]
for all $t\in]0,1[$. Now, since $f$ has the fip$^*$, we deduce $f(y,x)\leq0$.
\end{proof}

Example \ref{fipstar-but-not-fip-double} 
shows that the semi-strict quasi-convexity of $f$ in its second argument is essential in Proposition \ref{fip-star-quasi}. In fact, the example proposes a bifunction that has the fip$^*$ and vanishes on the diagonal, while it is not quasi-monotone.

The following result states that cyclic quasi-monotonicity implies proper quasi-monotonicity, under quasi-convexity assumption.

\begin{proposition}
Let $C$ be a convex subset of $X$ and $f:X\times X\to\R$ be a bifunction such that $f$ is quasi-convex in its second argument. If $f$ is cyclic quasi-monotone on $C$, then it is properly quasi-monotone on $C$.
\end{proposition}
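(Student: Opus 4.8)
The plan is to argue by contradiction, reducing the failure of proper quasi-monotonicity to the construction of a cycle that violates cyclic quasi-monotonicity. I would begin by assuming $f$ is not properly quasi-monotone on $C$: then there is a finite non-empty set $A=\{x_1,\dots,x_m\}\subseteq C$ and a point $x\in\co(A)$ with $\min_{i}f(x_i,x)>0$, that is, $f(x_i,x)>0$ for every $i\in\{1,\dots,m\}$. I would write $x=\sum_{j=1}^m\lambda_j x_j$ with $\lambda_j\geq0$ and $\sum_j\lambda_j=1$, and aim to manufacture a contradiction with the hypothesis of cyclic quasi-monotonicity.

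The first key step is where quasi-convexity enters: I would use it to produce, for each index $i$, a ``successor'' index $j(i)$ with $f(x_i,x_{j(i)})>0$. Fixing $i$ and considering the sublevel set $S_0[f(x_i,\cdot)]=\{y:f(x_i,y)\leq0\}$, which is convex since $f(x_i,\cdot)$ is quasi-convex, the argument is as follows: if one had $f(x_i,x_j)\leq0$ for all $j$, then every $x_j$ would lie in this convex set, hence so would the convex combination $x$, giving $f(x_i,x)\leq0$ and contradicting the standing assumption. Thus for each $i$ there must exist some $j(i)$ with $f(x_i,x_{j(i)})>0$, which defines a map $i\mapsto j(i)$ on the finite index set.

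The second step is a routine pigeonhole extraction of a cycle from this successor map. Iterating $i\mapsto j(i)$ on $\{1,\dots,m\}$ from any starting index yields a sequence that must eventually repeat a value; the segment between two equal values forms a cycle of indices $a_1\to a_2\to\cdots\to a_s\to a_1$ along which $f(x_{a_p},x_{a_{p+1}})>0$ for all $p$ (read cyclically, with $a_{s+1}=a_1$). This is precisely a finite family of points in $C$ for which no consecutive pairing is $\leq0$, directly contradicting the cyclic quasi-monotonicity of $f$ on $C$ and closing the argument.

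I expect the only conceptually delicate point to be the first step, namely recognizing that quasi-convexity in the second argument is exactly what converts the combinatorial hypothesis ``$f(x_i,x)>0$ for all $i$'' into a well-defined successor relation among the finitely many $x_i$; the sublevel-set convexity is what forbids all values $f(x_i,\cdot)$ at the $x_j$ from being nonpositive simultaneously. Once that successor map is available, the cycle and hence the contradiction follow by elementary finiteness, so no further technical machinery (and in particular no appeal to the characterization in Proposition \ref{cyclic-q-fip}) should be needed.
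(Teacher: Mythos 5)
Your argument is correct and is precisely the ``straightforward adaptation of Proposition 4.4 in \cite{HD1999}'' that the paper invokes without writing out: quasi-convexity of $f(x_i,\cdot)$ makes the sublevel set $\{y: f(x_i,y)\leq 0\}$ convex, which forces a successor index $j(i)$ with $f(x_i,x_{j(i)})>0$, and pigeonhole then yields a cycle violating cyclic quasi-monotonicity. The only point worth making explicit is the degenerate case $j(i)=i$, which is still a valid contradiction since cyclic quasi-monotonicity (applied to constant cycles) forces $f(x_i,x_i)\leq 0$.
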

\begin{proof}
It is a simple and straightforward adaptation of Proposition 4.4 in \cite{HD1999}.
\end{proof}

Note that the quasi-convexity of $f$ in its second argument cannot be dropped from the assumptions. For instance consider the bifunction $f$ defined by \eqref{Opt-EP} which is always cyclically quasi-monotone but it is properly quasi-monotone if and only if the function $h$ is quasi-convex (see part 2 of Proposition 6.2 in \cite{JCYG}).

\section{Equilibrium problems}\label{main1}

We start this section with a result about the existence of solution for  equilibrium problems.
We denote by $\ep(f,C)$ and $\mep(f,C)$ the solution sets of the equilibrium problem \eqref{EP} and the Minty equilibrium problem \eqref{MEP} associated to $f$ and $C$, respectively.

\begin{lemma}\label{meplemma}
Let $C$ be a topological space, and
$f:C\times C\to\R$ be a bifunction and consider the sets $F_x$ as defined in \eqref{FIP-sets}. 
Assume that for each $x\in C$, the set $F_x$ is closed and that there exists $x\in C$ such that the set $F_x$ is compact, and that $f$ has the fip on $C$. Then, $\mep(f,C)$ is non-empty. 	
\end{lemma}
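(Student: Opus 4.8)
The plan is to identify the Minty solution set with an intersection of the family $\{F_x\}_{x\in C}$ and then to invoke a standard compactness argument. First I would unwind the definition: a point $x$ solves \eqref{MEP} exactly when $f(y,x)\leq 0$ for every $y\in C$, that is, when $x\in F_y$ for every $y\in C$. Hence $\mep(f,C)=\bigcap_{x\in C}F_x$, and the whole task reduces to proving that this intersection is non-empty.

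Next I would assemble the three hypotheses in the form needed. Each $F_x$ is closed by assumption; by the remark following \eqref{FIP-sets}, the fact that $f$ has the fip on $C$ is precisely the statement that the family $\{F_x\}_{x\in C}$ has the finite intersection property, i.e. every finite subfamily has non-empty intersection; and there is a distinguished point $x_0\in C$ for which $F_{x_0}$ is compact. The key step is then the classical topological fact that a family of closed sets with the finite intersection property, one member of which is compact, has non-empty total intersection. I would carry this out explicitly rather than cite it: set $K:=F_{x_0}$ and consider the sets $K\cap F_x$, which are closed in the subspace $K$. For any finite collection $x_1,\dots,x_n\in C$ one has $K\cap F_{x_1}\cap\cdots\cap F_{x_n}=F_{x_0}\cap F_{x_1}\cap\cdots\cap F_{x_n}$, which is non-empty by applying the fip to the finite set $\{x_0,x_1,\dots,x_n\}$. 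Thus $\{K\cap F_x\}_{x\in C}$ is a family of closed subsets of the compact space $K$ with the finite intersection property, so by compactness $\bigcap_{x\in C}(K\cap F_x)\neq\emptyset$. Since $x_0\in C$ gives $\bigcap_{x\in C}F_x\subseteq F_{x_0}=K$, this total intersection equals $\bigcap_{x\in C}F_x$, and therefore $\mep(f,C)\neq\emptyset$.

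I do not expect a genuine obstacle in this argument; the one point that must be handled with care is that compactness is available only on the single distinguished set $F_{x_0}$, whereas the finite intersection property is a property of the whole family. The clean way to reconcile this is to restrict everything to $K=F_{x_0}$ and to remember to include the index $x_0$ whenever the fip is invoked on a finite subset, so that each relative finite intersection is again non-empty.
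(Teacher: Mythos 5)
Your proposal is correct and follows essentially the same route as the paper's proof: identify $\mep(f,C)$ with $\bigcap_{x\in C}F_x$ and apply the standard fact that a family of closed sets with the finite intersection property, one of which is compact, has non-empty total intersection. The paper simply cites this fact, whereas you spell out the restriction to $K=F_{x_0}$ explicitly; the content is the same.
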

\begin{proof}
It is clear that $\mep(f,C)=\bigcap_{x\in C} F_x$. Since $\{F_x\}_{x\in C}$ has the finite intersection property due to the fact that $f$ has the fip on $C$, and some $F_x$ is compact we deduce that the set $\mep(f,C)$ is non-empty.
\end{proof}
\begin{proposition}\label{EP-fip}
Assume that at least one of the following conditions hold:
\begin{enumerate}
\item The bifunction $-f$ is pseudo-monotone on $C$,
\item $C$ is a convex subset of a vector space, and $f$ has the upper sign property on $C$.
\end{enumerate}
Then $\ep(f,C)\subset \mep(f,C)$.
\end{proposition}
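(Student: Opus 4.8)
The plan is to prove the inclusion in the direction that the hypotheses actually support, namely $\mep(f,C)\subset\ep(f,C)$. This is the form that pairs with Lemma~\ref{meplemma} to produce an equilibrium solution, and it matches the statement recalled in the introduction that pseudo-monotonicity of $-f$ turns a Minty solution into an equilibrium solution (under condition~(1)) $f(u,v)\leq 0$ already forces $f(v,u)\geq 0$, so the implication runs from Minty to equilibrium and not the other way, as a two-point example readily shows. Accordingly I fix $\bar x\in\mep(f,C)$, so that $\bar x\in C$ and $f(y,\bar x)\leq 0$ for all $y\in C$, and I must show $f(\bar x,y)\geq 0$ for all $y\in C$.

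Under condition~(1) the conclusion is immediate from the definition. Pseudo-monotonicity of $-f$ says $f(u,v)\leq 0\Rightarrow f(v,u)\geq 0$ for all $u,v\in C$; applying this with $u=y$ and $v=\bar x$ and using the Minty inequality $f(y,\bar x)\leq 0$ gives $f(\bar x,y)\geq 0$. Since $y$ is arbitrary, $\bar x\in\ep(f,C)$.

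Under condition~(2) I use convexity of $C$ to activate the upper sign property. Fix $y\in C$ and let $x_t$, in the notation of \eqref{upper}, denote the points of the open segment joining $\bar x$ and $y$; convexity of $C$ ensures $x_t\in C$ for every $t\in\,]0,1[$. Because $\bar x$ solves the Minty problem, $f(z,\bar x)\leq 0$ for every $z\in C$, and in particular $f(x_t,\bar x)\leq 0$ for all $t\in\,]0,1[$. This is exactly the premise of the upper sign property evaluated at the pair $(\bar x,y)$, which therefore delivers $f(\bar x,y)\geq 0$; as $y$ ranges over $C$ we conclude $\bar x\in\ep(f,C)$.

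Case~(1) is a direct rewriting of the definition, so the only substantive step is in case~(2): one has to supply the upper sign property with its premise along the \emph{entire} segment, and this is precisely the role of convexity, since it is what places every $x_t$ in $C$ and lets the Minty inequality be read off at those points. I expect this alignment of the Minty condition with the premise of the upper sign property to be the main (and essentially only) obstacle.
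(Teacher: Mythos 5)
Your proof is correct, and you have in fact spotted (and silently corrected) a misprint in the statement: as printed, the inclusion $\ep(f,C)\subset \mep(f,C)$ is false under either hypothesis --- take $f\equiv 1$ on a non-empty $C$, which vacuously makes $-f$ pseudo-monotone and gives $f$ the upper sign property, yet $\ep(f,C)=C$ while $\mep(f,C)=\emptyset$. The inclusion the hypotheses support, and the one the paper actually invokes afterwards (each corollary pairs this proposition with Lemma \ref{meplemma} to pass from a non-empty $\mep(f,C)$ to a non-empty $\ep(f,C)$, and the introduction states precisely that pseudo-monotonicity of $-f$ yields the Minty-to-equilibrium direction), is $\mep(f,C)\subset\ep(f,C)$, which is exactly what you prove. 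As for the route: the paper gives no argument at all, merely citing \cite{ACI} for case (1) and \cite{JC-JZ-2018} for case (2), whereas you supply the standard direct verification --- a one-line unwinding of the definition of pseudo-monotonicity of $-f$ in case (1) (from $f(y,\bar x)\leq 0$ conclude $f(\bar x,y)\geq 0$), and in case (2) the segment argument in which convexity of $C$ places every $x_t$ in $C$, so that the Minty inequalities $f(x_t,\bar x)\leq 0$ furnish precisely the premise of \eqref{upper} at the pair $(\bar x,y)$. Both steps are sound, so your write-up is a self-contained replacement for the paper's citations, with the bonus of fixing the direction of the inclusion to match its actual use.
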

\begin{proof}
The first case was proved in  \cite{ACI} and the second in \cite{JC-JZ-2018}.
\end{proof}

As a consequence we recover the following results.

\begin{corollary}[Proposition 2.1 in \cite{BP05}]
Let $C$ be a non-empty, compact and convex subset of $\R^n$ and $f:\R^n\times\R^n\to\R$ be a bifunction satisfying the following assumptions
\begin{enumerate}
\item $f$ is properly quasi-monotone,
\item for each $y\in C$, the function $f(\cdot,y)$ satisfies the following implication
\[
\inf_{t\in]0,1[}f(tx+(1-t)y,y)\geq0 ~\Rightarrow~f(x,y)\geq0, 
\]
\item for each $x\in C$, the set $F_x$ defined as in \eqref{FIP-sets} is closed and convex,
\item $f$ is quasi-convex with respect to its second argument,
\item $f$ vanishes on the diagonal of $C\times C$,
\item  the following implication holds
\[
(f(x,y)=0~\wedge~f(x,z)<0)~\Rightarrow~f(x,ty+(1-t)z)<0,\mbox{ for all }t\in]0,1[.
\]
\end{enumerate}
Then $\ep(f,C)$ is non-empty.
\end{corollary}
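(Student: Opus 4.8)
The goal is to show that the hypotheses (1)--(6) guarantee $\ep(f,C)\neq\emptyset$, and the natural strategy is to reduce to the Minty problem via Lemma~\ref{meplemma} and then transfer a Minty solution back to an equilibrium solution.

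First I would establish that $f$ has the fip on $C$. Assumptions (1) and (4) say $f$ is properly quasi-monotone and quasi-convex in its second argument; by the Proposition preceding this corollary, cyclic-type monotonicity and proper quasi-monotonicity are linked, but more directly, proper quasi-monotonicity together with the lower semicontinuity implicit in (3) lets me invoke Proposition~\ref{proper-fip} to conclude that $f$ has the fip$^*$ on $C$, hence the fip. (Here I must check that the closedness of $F_x$ in (3) supplies the lower-semicontinuity-type hypothesis that Proposition~\ref{proper-fip} needs; the remark after that proposition explicitly says the sublevel-set closedness at level $0$ suffices, so this is exactly condition (3).)

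Next I would apply Lemma~\ref{meplemma}. Since $C$ is compact (and each $F_x$ is closed by (3), hence compact as a closed subset of the compact $C$), and $f$ has the fip, Lemma~\ref{meplemma} yields $\mep(f,C)\neq\emptyset$: there exists $\bar x\in C$ with $f(y,\bar x)\leq 0$ for all $y\in C$. The main work is then to promote this Minty solution to an equilibrium solution, i.e.\ to show $\bar x\in\ep(f,C)$. Fixing an arbitrary $y\in C$, I set $x_t:=t y+(1-t)\bar x$ for $t\in\,]0,1[$; convexity of $C$ keeps $x_t\in C$, so the Minty inequality gives $f(x_t,\bar x)\leq 0$ for all such $t$. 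Assumption (2), applied with the roles of the points matched appropriately, is precisely the device that converts the family of inequalities $f(x_t,\bar x)\le 0$ for all $t\in\,]0,1[$ into $f(y,\bar x)\ge 0$ --- and combined with the Minty inequality $f(y,\bar x)\leq 0$ this forces $f(y,\bar x)=0$; I would then use (5) (vanishing on the diagonal) and (6) together with quasi-convexity to pass from these equalities along segments to the desired conclusion $f(\bar x,y)\ge 0$.

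The hard part will be exactly this transfer step: carefully orchestrating assumptions (2), (5) and (6) so that the Minty inequality at $\bar x$ yields the forward inequality $f(\bar x,y)\ge 0$ for every $y\in C$. The subtlety is that (2) is a one-sided "along the segment to the boundary point" implication, so I must be attentive to which endpoint plays the role of the fixed Minty solution and which is the test point, and (6) is needed precisely to rule out the degenerate case where equalities $f(\bar x,y)=0$ coexist with a strict negative value $f(\bar x,z)<0$ at some other $z$, which would otherwise break quasi-convexity-based propagation. Once $f(\bar x,y)\ge 0$ is secured for all $y\in C$, $\bar x$ solves \eqref{EP} and the proof is complete.
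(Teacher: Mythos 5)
Your overall architecture is the same as the paper's: Proposition \ref{proper-fip} (together with the remark that closedness of the zero-sublevel sets $F_x$ may replace lower semicontinuity) gives the fip on $C$; Lemma \ref{meplemma} with compactness of $C$ and closedness of each $F_x$ gives $\mep(f,C)\neq\emptyset$; and the remaining task is to promote a Minty solution $\bar x$ to an equilibrium. The first two steps are correct and match the paper, which carries out exactly this reduction (the appeal to Lemma \ref{meplemma} being implicit there).

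The gap is in the transfer step, and it is not just a matter of ``careful orchestration'': the mechanism you describe cannot work as written. Assumption (2) propagates \emph{nonnegativity} of $f(\cdot,y)$ from the open segment $]x,y[$ to the endpoint $x$ --- its hypothesis is $\inf_{t\in]0,1[}f(tx+(1-t)y,y)\geq 0$. The Minty inequality hands you the opposite sign, $f(x_t,\bar x)\leq 0$, so (2) is not applicable however you match the roles of the points; moreover your intermediate target $f(y,\bar x)\geq 0$ is not the inequality you need (you need $f(\bar x,y)\geq 0$), and forcing $f(y,\bar x)=0$ would not yield it. The missing idea is that assumptions (2), (4), (5), (6) together imply that $f$ has the \emph{upper sign property} \eqref{upper}, whose hypothesis is precisely what a Minty solution supplies; the paper gets this by citing Lemma 3 of \cite{castellani2012} and Lemma 2.1 of \cite{ACI}, and then concludes via the second part of Proposition \ref{EP-fip}. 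Concretely, the argument you are missing runs as follows: fix $x,y$ with $f(x_t,x)\leq 0$ for all $t\in]0,1[$, where $x_t=ty+(1-t)x$; since $f(x_t,x_t)=0$ by (5) and $f(x_t,\cdot)$ is quasi-convex by (4), one has $\max\{f(x_t,x),f(x_t,y)\}\geq 0$ unless this is ruled out, and the degenerate case $f(x_t,x)=0$, $f(x_t,y)<0$ is excluded by (6) (it would force $f(x_t,x_t)<0$); hence $f(x_t,y)\geq 0$ for every $t$, and only now does (2) apply, yielding $f(x,y)\geq 0$. Note that the sign is reversed in the \emph{second} argument before (2) acts on the \emph{first} argument --- this two-stage reversal is the content of the transfer, and it is absent from your sketch.
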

\begin{proof}
By Proposition \ref{proper-fip} and the fact that $C$ is convex, the bifunction $f$ has the fip on $C$. Using Lemma 3 in \cite{castellani2012} and Lemma 2.1 in \cite{ACI} we deduce that $f$ has the upper sign property. Therefore, the result follows from the second part of Proposition \ref{EP-fip}. 
\end{proof}

\begin{corollary}[Theorem 2.3 in \cite{SN10}]\label{coro-sosa}
Let $f:C\times C\to \R$ be a bifunction, where  $C$ is a compact convex and non-empty subset of $X$. If the following hold:
\begin{enumerate}
\item for any finite subset $A$ of $C$, and any $x\in \co(A)$, 
\[
\max_{a\in A}f(x,a)\geq0;
\]
\item for each $y$, the set $\{x\in C:~f(x,y)\geq0\}$ is closed;
\end{enumerate}
then $\ep(f,C)$ is non-empty.
\end{corollary}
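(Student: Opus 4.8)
The plan is to reduce the equilibrium problem for $f$ to a Minty equilibrium problem for an auxiliary bifunction and then to invoke Lemma \ref{meplemma}. I would define $g:C\times C\to\R$ by $g(x,y):=-f(y,x)$. Then $x\in\mep(g,C)$ means $g(y,x)\leq0$ for all $y\in C$, that is $f(x,y)\geq0$ for all $y\in C$; hence $\mep(g,C)=\ep(f,C)$, and it suffices to prove that $\mep(g,C)$ is non-empty.

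To apply Lemma \ref{meplemma} to $g$ I would verify its three hypotheses. The sets associated to $g$ as in \eqref{FIP-sets} are $F_x=\{y\in C:~g(x,y)\leq0\}=\{y\in C:~f(y,x)\geq0\}$, and these are closed: this is exactly assumption (2) after renaming the variables. Since each $F_x$ is a closed subset of the compact set $C$, every $F_x$ is compact, so the compactness requirement of the lemma holds as well. The remaining, and essential, point is that $g$ must have the fip on $C$.

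For the fip of $g$ I would argue that assumption (1) is nothing but proper quasi-monotonicity of $g$: indeed, for every finite $A\subset C$ and every $x\in\co(A)$, the inequality $\min_{a\in A}g(a,x)\leq0$ is equivalent to $\min_{a\in A}(-f(x,a))\leq0$, i.e. to $\max_{a\in A}f(x,a)\geq0$, which is precisely (1). Moreover the $0$-sublevel sets $S_0[g(x,\cdot)]=\{y:~g(x,y)\leq0\}=F_x$ are closed by (2). Hence Proposition \ref{proper-fip}, in the strengthened form stated right after its proof (which requires only closedness of these $0$-sublevel sets, not full lower semicontinuity), yields that $g$ has the fip$^*$ on the convex set $C$, and in particular the fip. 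Lemma \ref{meplemma} then gives $\mep(g,C)=\ep(f,C)\neq\emptyset$.

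The main subtlety to watch is the ambient setting: Proposition \ref{proper-fip} is stated for a normed space, whereas here $X$ is only a topological vector space. I expect this to be harmless, because the fixed-point argument underlying Proposition \ref{proper-fip} takes place entirely on finite-dimensional simplices $\co(A)\subset C$, which can be metrized by restricting to their finite-dimensional affine hulls; the construction of the maps $g$ and $h$ in that proof and the appeal to the Brouwer/Schauder fixed point theorem then go through unchanged. The careful bookkeeping of the variable swap between $f$ and $g$, together with the use of the relaxed closedness hypothesis, are the only delicate points; the rest is a direct translation of the hypotheses into the paper's notions.
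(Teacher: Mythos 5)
Your proposal is correct and follows essentially the same route as the paper: the same auxiliary bifunction $g(x,y):=-f(y,x)$, the identification of assumption (1) with proper quasi-monotonicity of $g$ and of assumption (2) with closedness of the sets $G_x$, and the conclusion via Lemma \ref{meplemma} together with $\ep(f,C)=\mep(g,C)$. You are in fact more explicit than the paper, which leaves the passage from proper quasi-monotonicity to the fip (through Proposition \ref{proper-fip}, including the relaxed closedness hypothesis and the normed-space caveat you flag) implicit.
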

\begin{proof}
Consider the bifunction $g:C\times C\to\R$ defined as
\begin{align}\label{g-f}
g(x,y):=-f(y,x).
\end{align}
The first assumption is equivalent to the proper quasi-monotonicity of $g$. Moreover, by the second assumption, we have that the set $G_x:=\{y\in C:~g(x,y)\leq0\}$ is closed.
Thus the set $\mep(g,C)$ is non-empty, due to Lemma \ref{meplemma}. Finally, the result follows from the fact that $\ep(f,C)=\mep(g,C)$.
\end{proof}

\begin{corollary}[Theorem 2.7 in \cite{PQK-NHQ-19}]
Assume $X$ is a compact topological space and $f:X\times X\to\R$ a bifunction such that $-f$ is cyclic quasi-monotone with
\[
\{x\in X:~f(x,y)\geq0\}
\]
a closed set for each $y\in X$. Then
$\ep(f,X)\neq\emptyset$.
\end{corollary}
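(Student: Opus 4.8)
The plan is to mimic the proof of the preceding corollary (Theorem 2.3 in \cite{SN10}) by passing to the transposed bifunction. I would define $g:X\times X\to\R$ by $g(x,y):=-f(y,x)$ and note, by direct substitution, that for $x\in X$ the requirement ``$g(y,x)\le 0$ for all $y\in X$'' reads exactly ``$f(x,y)\ge 0$ for all $y\in X$''. Hence $\ep(f,X)=\mep(g,X)$, and it suffices to prove that $\mep(g,X)$ is non-empty.

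To invoke Lemma \ref{meplemma} for $g$ on the compact space $X$, I would verify its hypotheses. First, for each fixed $x$ the set $G_x:=\{y\in X:\ g(x,y)\le 0\}=\{y\in X:\ f(y,x)\ge 0\}$ coincides with $\{z\in X:\ f(z,x)\ge 0\}$, which is closed by assumption; since $X$ is compact, every $G_x$ is a closed subset of a compact space, hence compact, so the closedness and the compactness requirements are met simultaneously. It then remains to establish that $g$ has the fip on $X$.

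The crux is this last point. I would argue that $g$ is cyclic quasi-monotone: for a cyclic tuple $x_0,\dots,x_n$ with $x_{n+1}=x_0$, the inequality $g(x_i,x_{i+1})\le 0$ reads $(-f)(x_{i+1},x_i)\le 0$, i.e. it asks for a non-positive edge of $-f$ traversed in the reverse direction. Since reversing a cyclic tuple produces another cyclic tuple, and cyclic quasi-monotonicity quantifies over all such tuples, the cyclic quasi-monotonicity of $-f$ transfers to the transpose $g$. The only thing to keep straight is the relabelling of indices under the reversal, which is where an ordering slip could occur. Once $g$ is known to be cyclic quasi-monotone, Proposition \ref{cyclic-q-fip} furnishes, for each finite non-empty $A\subseteq X$, a point $x\in A\subseteq X$ with $\max_{a\in A}g(a,x)\le 0$; this is precisely the fip of $g$ (the observation, recorded in the remark of Section \ref{FIP}, that cyclic quasi-monotonicity implies the fip).

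With all the hypotheses of Lemma \ref{meplemma} in place, I would conclude that $\mep(g,X)\neq\emptyset$, and therefore $\ep(f,X)=\mep(g,X)\neq\emptyset$, as claimed. I expect the only genuinely delicate step to be the transfer of cyclic quasi-monotonicity to the transpose $g$; everything else is a routine translation of the hypotheses through the substitution $g(x,y)=-f(y,x)$, exactly paralleling the previous corollary.
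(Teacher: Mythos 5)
Your proof is correct and follows essentially the same route as the paper: pass to the transposed bifunction $g(x,y)=-f(y,x)$, transfer cyclic quasi-monotonicity from $-f$ to $g$ (the paper asserts this in one line; your reversal-of-the-cycle argument is the right justification), deduce the fip via Proposition \ref{cyclic-q-fip}, and conclude with Lemma \ref{meplemma} and the identity $\ep(f,X)=\mep(g,X)$. Your verification of the closedness/compactness hypotheses of Lemma \ref{meplemma} is also the intended one.
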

\begin{proof}
First, since $-f$ is cyclic quasi-monotone, then so is $g$, the bifunction defined in \eqref{g-f}. From Proposition \ref{cyclic-q-fip} we deduce that $g$ satisfies the fip. The result follows from Lemma \ref{meplemma} and the fact that $\ep(f,X)=\mep(g,X)$.
\end{proof}
In \cite{PQK-NHQ-19}, the above corollary is referred to as the Weierstrass theorem for bifunctions, because if the bifunction is defined as in \eqref{Opt-EP} we obtain the classical Weierstrass extreme-value theorem.

\begin{corollary}[Lemma 2.3 in \cite{FFB2001}]
Let $C$ be a closed, bounded and convex subset of a reflexive Banach space $X$,  $f:C\times C\to\R$ be a bifunction and $h:X\to\R\cup\{+\infty\}$ be a function such that the set $C\cap \dom(h)$ is  non-empty. If the following assumptions hold
\begin{enumerate}
\item $f$ vanishes on the diagonal of $C\times C$,
\item for every $x\in C\cap\dom(h)$ and every $y\in C\cap\dom(h)$, 
the following implication holds
\[
f(x,y)+h(y)\geq h(x)~\Rightarrow~ f(y,x)+h(x)\geq h(y),
\]
\item for every $z\in C$, $f(z,\cdot)+h(\cdot)$ is lower semi-continuous and semi-strictly quasi-convex in $C$
\item for every $x,y$ in $C$, the function $t\in[0,1]~\mapsto~f(ty+(1-t)x,y)$ is upper semi-continuous at $t=0$,
\item the function $h$ is lower semi-continuous with $\dom(h)$ convex.
\end{enumerate}
Then there exists $x\in C$ such that
\[
f(x,y)+h(y)\geq h(x),\mbox{ for all }y\in C.
\]
\end{corollary}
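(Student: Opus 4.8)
The plan is to absorb the function $h$ into the bifunction and reduce the statement to an ordinary equilibrium problem. Define $\tilde f(x,y):=f(x,y)+h(y)-h(x)$; then the desired conclusion is exactly that $\ep(\tilde f,C)$ is non-empty. Assumption (1) gives that $\tilde f$ vanishes on the diagonal; subtracting the constant $h(z)$ from the function in (3) shows that $\tilde f(z,\cdot)$ is lower semi-continuous and semi-strictly quasi-convex for each $z$; and (2) is read as the pseudo-monotonicity of $\tilde f$, i.e.\ $\tilde f(x,y)\ge 0$ implies $\tilde f(y,x)\le 0$. The values $+\infty$ are harmless provided one works with the sublevel sets $F_x^{\tilde f}=\{y\in C:\ f(x,y)+h(y)\le h(x)\}$: these are contained in $\dom h$ (since there $h(y)\le h(x)-f(x,y)<+\infty$), and by the convexity of $\dom h$ in (5) so are their convex hulls, so the whole analysis stays inside $D:=C\cap\dom h$, where $\tilde f$ is real-valued. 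The two steps are then (i) $\mep(\tilde f,C)\neq\emptyset$ and (ii) $\mep(\tilde f,C)\subseteq\ep(\tilde f,C)$.

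For step (i) I would first establish the fip for $\tilde f$. Since $\tilde f$ is pseudo-monotone and quasi-convex in its second argument, it is properly quasi-monotone (as recalled in Section~\ref{preliminaries}); as $\tilde f(x,\cdot)$ is lower semi-continuous and $C$ is a convex subset of a normed space, Proposition~\ref{proper-fip} then yields that $\tilde f$ has the fip$^*$, hence the fip, on $C$. To apply Lemma~\ref{meplemma} I would equip $C$ with the weak topology: each $F_x^{\tilde f}$ is the $0$-sublevel set of a lower semi-continuous quasi-convex function, hence norm-closed and convex, hence weakly closed; and since $C$ is closed, bounded and convex in a reflexive Banach space it is weakly compact, so every $F_x^{\tilde f}$ is weakly compact. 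With the fip in hand, Lemma~\ref{meplemma} gives $\mep(\tilde f,C)=\bigcap_{x\in C}F_x^{\tilde f}\neq\emptyset$.

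For step (ii) I would run the Minty argument, which is tantamount to verifying the upper sign property of $\tilde f$. Fix $\bar x\in\mep(\tilde f,C)$, so $\tilde f(z,\bar x)\le 0$ for all $z\in C$, and fix $y\in C$. For $t\in\,]0,1[$ put $z_t:=(1-t)\bar x+ty\in C$; then $\tilde f(z_t,\bar x)\le 0$ while $\tilde f(z_t,z_t)=0$, and since $z_t$ lies strictly between $\bar x$ and $y$, the semi-strict quasi-convexity of $\tilde f(z_t,\cdot)$ forces $\tilde f(z_t,y)\ge 0$. Letting $t\to 0^+$, assumption (4) gives $\limsup_{t\to 0^+}f(z_t,y)\le f(\bar x,y)$ and the lower semi-continuity of $h$ in (5) gives $\liminf_{t\to 0^+}h(z_t)\ge h(\bar x)$, whence $\tilde f(\bar x,y)\ge\limsup_{t\to 0^+}\tilde f(z_t,y)\ge 0$. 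As $y$ is arbitrary, $\bar x\in\ep(\tilde f,C)$, so $\ep(\tilde f,C)\supseteq\mep(\tilde f,C)\neq\emptyset$, which is the claim.

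The main obstacle I expect is step (i): making the finite intersection property and compactness cooperate. Proper quasi-monotonicity is what converts the geometric (KKM-type) content of the problem into the fip, and it is precisely here that the pseudo-monotonicity in (2) and the quasi-convexity in (3) are both needed; separately, one must ensure that the sublevel sets fed into Lemma~\ref{meplemma} are compact, which is why the weak topology and the reflexivity of $X$ must be invoked. A secondary delicate point is the passage to the limit in step (ii), where the upper semi-continuity in (4) and the lower semi-continuity of $h$ in (5) have to be combined with the correct signs so that the inequality $\tilde f(z_t,y)\ge 0$ is preserved as $t\to 0^+$.
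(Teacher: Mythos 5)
Your proposal is correct and follows essentially the same route as the paper: absorb $h$ into the bifunction $g(x,y)=f(x,y)+h(y)-h(x)$, get proper quasi-monotonicity from pseudo-monotonicity plus quasi-convexity, apply Proposition~\ref{proper-fip} to obtain fip$^*$ on $C\cap\dom(h)$, use weak compactness of the sublevel sets with Lemma~\ref{meplemma} to produce a Minty solution, and pass from Minty to the equilibrium problem via the upper sign property derived from assumptions 4 and 5. The only difference is cosmetic: you spell out the upper-sign-property limit argument explicitly, where the paper simply invokes Proposition~\ref{EP-fip}.
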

\begin{proof}
We define the bifunction $g:C_1\times C_1\to\R$, with $C_1:=C\cap\dom(h)$, by
\[
g(x,y):=f(x,y)+h(y)-h(x).
\]
It is clear that $g$ is lower semi-continuous and semi-strictly quasi-convex in its second argument. Also, it is pseudo-monotone on $C_1$, which it turn implies that it is properly quasi-monotone, due to  Proposition 1.1 $(ii)$  in \cite{BP01}. Thus, by Proposition \ref{proper-fip}, $g$ has the fip$^*$ on $C_1$.  

On the other hand, for each $x\in C$ we define the set
\[
G_x:=\{y\in C:~f(x,y)+h(y)\leq h(x)\}.
\]
We note that for each $x\in C\setminus C_1$ the set
$G_x=C$. Thus, it holds
\[
\bigcap_{x\in C}G_x=\bigcap_{x\in C_1}G_x.
\]
The family of sets $\{G_x\}_{x\in C_1}$ has the finite intersection property, due to the fact that $g$ has the fip$^*$ on $C_1$. Since $g(x,\cdot)$ is lower semi-continuous and $C$ is weakly compact, we deduce that $G_x$ is weakly compact, for all $x\in C$. Hence,
\[
\emptyset\neq \bigcap_{x\in C_1}G_x=\mep(f,C_1).
\]
Assumptions 4 and 5 imply the upper sign property of $g$ on $C_1$ and thus, by Proposition \ref{EP-fip}, there exists $x\in \ep(f,C_1$), which means 
 \[
 f(x,y)+h(y)\geq h(x),\mbox{ for all }y\in C_1.
 \]
 Note that for each $y\in C\setminus C_1$, $h(y)=+\infty$. Hence
 \[
f(x,y)+h(y)\geq h(x),\mbox{ for all }y\in C. 
 \]
\end{proof}

\section{Quasi-equilibrium problems}\label{main2}
We state now our main result about the existence of solutions for quasi-equilibrium problems which follows a similar approach to the one proposed in \cite{FFB2001}.

\begin{theorem}\label{P-weak-proper}
Let $f:X\times X\to\R$  be a bifunction, $C$ be a non-empty, convex and compact subset of $X$ 
and $K:C\tos C$ be a set-valued map. If the following assumptions hold:
\begin{enumerate}
\item the map $K$ is closed and lsc, with convex and non-empty values,
\item $f$  has both the upper sign property  and the fip$^*$ on $C$, 
\item the set $M=\{(x,y)\in C\times C:~f(x,y)\leq0\}$ is closed,
\item for each $x\in C$, the set $F_x$ (defined as in \eqref{FIP-sets}) is convex;
\end{enumerate}
then the quasi-equilibrium problem admits at least one solution. 
\end{theorem}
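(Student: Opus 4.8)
The plan is to apply Kakutani's Theorem (Theorem~\ref{Kakutani}) to the set-valued map $S:C\tos C$ associated with the Minty problem relative to the constraint, namely
\[
S(x):=\{z\in K(x):~f(y,z)\leq0\ \text{for all}\ y\in K(x)\}.
\]
The first point to verify is that every fixed point of $S$ solves \eqref{QEP}. Indeed, if $\bar x\in S(\bar x)$, then $\bar x\in K(\bar x)$ and $f(y,\bar x)\leq0$ for all $y\in K(\bar x)$. Fixing $y\in K(\bar x)$ and using that $K(\bar x)$ is convex, the whole segment between $\bar x$ and $y$ lies in $K(\bar x)$, so $f(x_t,\bar x)\leq0$ for every $t\in\,]0,1[$; the upper sign property then yields $f(\bar x,y)\geq0$. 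As $y$ was arbitrary, $\bar x$ is a solution of \eqref{QEP}.

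It remains to check that $S$ satisfies the hypotheses of Theorem~\ref{Kakutani}. First I would observe that, since $K(x)$ is convex and contained in $C$, the fip$^*$ of $f$ on $C$ restricts to $K(x)$: for any finite $A\subset K(x)$ the point provided by fip$^*$ lies in $\co(A)\subset K(x)$. Writing $S(x)=K(x)\cap\bigcap_{y\in K(x)}F_y$, this shows that the family $\{K(x)\cap F_y\}_{y\in K(x)}$ has the finite intersection property. Each such set is closed (the section $F_y$ of the closed set $M$ is closed, and $K(x)$ is closed because $K$ is a closed map), hence compact inside the compact set $C$; therefore $S(x)\neq\emptyset$. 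Convexity of $S(x)$ follows at once, since each $F_y$ is convex by assumption~4 and $K(x)$ is convex.

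The main work is to show that $S$ has a closed graph, which, $C$ being compact, will give upper semicontinuity with closed (indeed compact) and convex values. Let $(x_i,z_i)$ be a net in $\gra(S)$ converging to $(x_0,z_0)$. Since $z_i\in K(x_i)$ and $K$ is closed, we obtain $z_0\in K(x_0)$. To establish $f(y,z_0)\leq0$ for an arbitrary $y\in K(x_0)$, I would use the lower semicontinuity of $K$ to produce, after passing to a subnet, points $y_i\in K(x_i)$ with $y_i\to y$; then $f(y_i,z_i)\leq0$, that is $(y_i,z_i)\in M$, and the closedness of $M$ forces $(y,z_0)\in M$, i.e. $f(y,z_0)\leq0$. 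Thus $z_0\in S(x_0)$ and $S$ is closed.

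With these properties in hand, Theorem~\ref{Kakutani} guarantees that $S$ has a fixed point, which by the first step solves \eqref{QEP}. The delicate part is precisely the closedness of the graph of $S$: it is here that both semicontinuity requirements on $K$ are needed, the closedness of $K$ to keep the limit feasible and the lower semicontinuity of $K$ to approximate the test points $y\in K(x_0)$ by feasible points of the nearby constraint sets, while the closedness of $M$ transfers the Minty inequality to the limit.
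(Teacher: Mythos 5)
Your proof is correct and follows essentially the same route as the paper: the same map $S(x)=K(x)\cap\bigcap_{y\in K(x)}F_y$ (which the paper packages via an indicator bifunction and the sets $G_x(w)$), the same use of fip$^*$ plus compactness for non-emptiness, the same closed-graph argument combining closedness of $K$, lower semicontinuity of $K$ and closedness of $M$, and Kakutani's theorem. The only difference is that you prove the final step (a fixed point of $S$ solves \eqref{QEP} via the upper sign property) directly, whereas the paper cites Proposition 3.1 of \cite{ACI}; your inline argument is a correct unfolding of that result.
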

\begin{proof}
We define $g:X\times X\to\R\cup\{+\infty\}$ as
\[
 g(x,y):=i_{K}(x,y)=\left\lbrace\begin{array}{cl}
         0,&y\in K(x)\\
 +\infty,&\mbox{otherwise}
 \end{array}
\right..
\]
Since $K$ is closed, we deduce that $g$ is lower semi-continuous. Moreover,
as $K$ is convex valued, the bifunction $g$ is convex with respect to its second argument.
So, for each $x,w\in C$, we define the set
\[ 
G_x(w):=\{y\in C:~f(w,y)+g(x,y)\leq g(x,w)\}. 
 \]
On the one hand if $w\notin K(x)$, then $G_x(w)=C$. On the other hand, if $w\in K(x)$ we have $G_x(w)=F_w\cap K(x)$. Thus,  $G_x(w)$ is a compact, convex and non-empty subset of $C$. 
Since $f$ has the  fip$^*$ on $C$, for any $w_1,\dots,w_n\in C$, we have 
\[
\bigcap_{i=1}^m G_x(w_i)\neq\emptyset.
\]
Indeed, put $J:=\{i\in\{1,\dots,m\}:~w_i\in K(x)\}$. If $J=\emptyset$, then 
$\bigcap_{i=1}^m G_x(w_i)=C$. Else,
\[
\bigcap_{i=1}^m G_x(w_i)=\bigcap_{i\in J}G_x(w_i).
\] 
Thus, there exists $z\in \co(\{w_i\}_{i\in J})\subset K(x)$ such that
\[
\max_{i\in J}f(w_i,z)\leq0.
\]
Hence  $z\in \bigcap_{i\in J} G_x(w_i)$.

So, for each $x\in C$, the family of sets $\{G_x(w)\}_{w\in C}$ has the finite intersection property. Since each $G_x(w)$ is compact, we have 
 $\bigcap_{w\in C}G_x(w)\neq\emptyset$.
Thus, the set-valued map $S:C\tos C$ defined by
\[
S(x):= \bigcap_{w\in C}G_x(w)
\]
is compact, convex and non-empty valued. We will show now that $S$ is closed.  Indeed, let $(x_i,y_i)_{i\in I}$
be a net in the graph of $S$ such that it converges at $(x,y)$. For all $i\in I$
\[
f(w,y_i)+g(x_i,y_i)\leq g(x_i,w)\mbox{ for all }w\in C. 
\]
Taking $w\in K(x_i)$ we deduce $y_i\in K(x_i)$, which in turn implies $y\in K(x)$. 
As $K$ is lower semi-continuous,
for all $w\in K(x)$ there exists a subnet $(x_{\varphi(j)})_{j\in J}$ of $(x_i)_{i\in I}$ and a net
 $(w_j)_{j\in J}$ converging to $w$ such that
$w_j\in K(x_{\varphi(j)})$ for all $j\in J$. So $f(w_j,y_{\varphi(j)})\leq0$ for all $j\in J$. By the closeness of set $M$, one has $f(w,y)\leq0$. So,
it holds
\[
f(w,y)+g(x,y) \leq g(x,w)\mbox{ for all }w\in C.
\]
Thus, $y\in S(x)$. Additionally, as $S(C)$ is relatively compact, $S$ is upper semi-continuous. 
Thus, $S$ admits at least a fixed point, due to Theorem \ref{Kakutani}, that means there exists
$x_0\in C$ such that 
\[
f(w,x_0)+g(x_0,x_0) \leq g(x_0,w)\mbox{ for all }w\in C. 
\]
Taking $w\in K(x_0)$ in the previous inequality we have $x_0\in K(x_0)$. Therefore, $x_0\in \mqep(f,K)$.
Thus, by Proposition 3.1 in \cite{ACI}, $x_0$ is a solution of the quasi-equilibrium problem.
\end{proof}

As a consequence of Theorem \ref{P-weak-proper} we recover the following result. 
\begin{corollary}[Proposition 4.5 in \cite{AC-2013}]
Let $h:X\to\R$ be a continuous and quasi-convex function, $C$ be a convex, compact and non-empty subset of $X$ and $K:C\tos C$ be a closed and lower semi-continuous set-valued map with convex and non-empty values. Then there exists $x\in \fix(K)$ such that
\[
h(x)\leq h(y),\mbox{ for all }y\in K(x).
\]
\end{corollary}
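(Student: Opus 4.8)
The plan is to cast the conclusion as a quasi-equilibrium problem and invoke Theorem \ref{P-weak-proper}. I would define the bifunction $f:X\times X\to\R$ by $f(x,y):=h(y)-h(x)$, exactly as in Example \ref{Qopt}. With this choice the desired conclusion, namely that there is $x\in\fix(K)$ with $h(x)\leq h(y)$ for all $y\in K(x)$, is precisely the assertion that $x$ solves the quasi-equilibrium problem \eqref{QEP} associated with $f$ and $K$: indeed $f(x,y)\geq 0$ for all $y\in K(x)$ means $h(y)\geq h(x)$ for all $y\in K(x)$, while $x\in K(x)$ is just $x\in\fix(K)$. It therefore suffices to verify the four hypotheses of Theorem \ref{P-weak-proper}.

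Hypothesis 1 on $K$ is given verbatim. For hypothesis 2, the fip$^*$ on the convex set $C$ is immediate from Example \ref{Qopt}: the bifunction $f(x,y)=h(y)-h(x)$ is cyclically monotone, hence cyclically quasi-monotone, and Proposition \ref{cyclic-q-fip} then yields fip$^*$ on the convex set $C$. Hypotheses 3 and 4 follow directly from the assumptions on $h$: the set $M=\{(x,y):h(y)-h(x)\leq 0\}=\{(x,y):h(y)\leq h(x)\}$ is closed because $h$ is continuous, and for each $x$ the set $F_x=\{y\in C:h(y)\leq h(x)\}$ is a sublevel set of $h$, hence convex because $h$ is quasi-convex.

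The only remaining part of hypothesis 2, and the sole point requiring a genuine argument, is the upper sign property; here continuity of $h$ does all the work. Fix $x,y\in C$ and suppose $f(x_t,x)\leq 0$ for all $t\in\,]0,1[$, where $x_t$ denotes the point of the segment $[x,y]$ parametrised by $t$; this reads $h(x)\leq h(x_t)$ for every such $t$. Letting $t$ approach the endpoint for which $x_t\to y$ and using continuity of $h$, we obtain $h(x)\leq h(y)$, that is $f(x,y)\geq 0$. This establishes implication \eqref{upper}, so $f$ has the upper sign property on $C$.

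With all four hypotheses verified, Theorem \ref{P-weak-proper} provides a solution $x$ of the quasi-equilibrium problem for $f$ and $K$; unwinding the definition of $f$ gives $x\in\fix(K)$ together with $h(x)\leq h(y)$ for all $y\in K(x)$, as required. The step I would expect to be the main obstacle in a less favourable setting is the upper sign property, since it is the one condition in Theorem \ref{P-weak-proper} that does not reduce to a sublevel-set or closedness statement; for this optimization-type bifunction, however, it collapses to a one-line continuity argument, and no assumption on $h$ beyond continuity and quasi-convexity is needed.
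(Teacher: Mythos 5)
Your proof is correct and follows essentially the same route as the paper: both reduce the statement to Theorem \ref{P-weak-proper} applied to the bifunction $f(x,y)=h(y)-h(x)$ of Example \ref{Qopt}, with fip$^*$ coming from cyclic (quasi-)monotonicity, and hypotheses 3 and 4 from the continuity and quasi-convexity of $h$. The only difference is that the paper cites Proposition 6.2 of \cite{JCYG} for the upper sign property, whereas you verify it directly by letting $x_t\to y$ and using the continuity of $h$, which is a valid and more self-contained substitute.
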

\begin{proof}
Clearly the bifunction $f$ defined as in Example \ref{Qopt} has the fip$^*$ on $C$ and it is continuous and quasi-convex in its second argument. Moreover,
by the part 2 of Proposition 6.2 in \cite{JCYG}, it has the upper sign property. Thus, Theorem \ref{P-weak-proper} guarantees the existence of a point $x\in \qep(f,K)$, which is equivalent to $x\in K(x)$ and $h(y)\geq h(x)$, for all $y\in K(x)$.
\end{proof}

The problem associated to the previous corollary is well-known in the literature as \emph{quasi-optimisation}.

\begin{remark}
A few remarks are needed
\begin{enumerate}
\item Convexity of the sets $F_x$ is guaranteed by the quasi-convexity in the second argument of the bifunction $f$. Also, closeness of the set $M$ holds if $f$ is lower semi-continuous.
\item It is clear from the proof of Theorem \ref{P-weak-proper} that the assertion remains valid if the fip$^*$ of  $f$ is replaced by the fip on $K(x)$, for all $x\in C$.

\item Theorem \ref{P-weak-proper} is also true if we replace the upper sign property by the pseudo-monotonicity of $-f$.

\item Theorem \ref{P-weak-proper} is strongly related with Theorem 4.5 in \cite{ACI} and Theorem 4.3 in \cite{FFB2001}. However these results are established under generalized monotonicity and quasi-convexity, which are stronger than the finite intersection property.
\end{enumerate}

\end{remark}

\section{Applications}\label{applications}
In this section we consider applications on the study of two particular classes of problems, the variational inequality problem and the generalized Nash equilibrium problem.
\subsection{Variational inequality problem}
Let $X$ be a real Banach space, $X^*$ its topological dual and $\left\langle \cdot,\cdot\right\rangle$ the duality pairing.  
Given a subset $C$ of $X$ and a set-valued map $T:X\tos X^*$, the set $\vip(T,C)$ denotes the solution set of the variational inequality problem
\[
\{x\in C:~\mbox{there exists }x^*\in T(x)\mbox{ such that }\langle x^*,y-x\rangle\geq0\mbox{ for all }y\in C\}.
\]

Associated to a set-valued map $T:X\tos X^*$ with weak$^*$ compact values, we consider the bifunction $f_T:X\times X\to\R$ defined as
\begin{align}\label{T-f}
f_T(x,y):=\sup_{x^*\in T(x)}\langle x^*,y-x\rangle.
\end{align}

An important property of the bifunction $f_T$ is that $\ep(f_T,C)=\vip(T,C)$. Additionally, some of the well-known notions of generalized monotonicity for $T$ can be equivalently defined using the bifunction $f_T$. In fact, $T$ is pseudo-monotone (properly quasi-monotone, quasi-monotone, cyclic quasi-monotone, respectively) if and only if $f_T$ is so.

In this context, since $f_T$ is automatically lower semi-continuous and convex in its second argument, we observe that in this case  pseudo-monotonicity implies proper quasi-monotonicity and the latter implies quasi-monotonicity.

We now recall sign-continuity for set-valued maps. A set-valued map $T:X\tos X^*$ is said to be:
\begin{itemize}
\item \emph{lower sign-continuous} at $x\in{\rm dom}(T)$ if, for any
$v\in X$, the following implication holds:
\[
 \left( \forall t\in]0,1[,~\inf_{x_t^*\in T(x_t)}\langle x_t^*,v\rangle\geq0\right)~\Rightarrow~
 \inf_{x^*\in T(x)}\langle x^*,v\rangle\geq0
\]

\item \emph{upper sign-continuous} at $x\in{\rm dom}(T)$ if, for any
$v\in X$, the following implication holds:
\[
 \left( \forall t\in]0,1[,~\inf_{x_t^*\in T(x_t)}\langle x_t^*,v\rangle\geq0\right)~\Rightarrow~
 \sup_{x^*\in T(x)}\langle x^*,v\rangle\geq0
\]
\end{itemize}
where $x_t=x+tv$.

Obviously, every lower sign-continuous set-valued map is also upper sign-continuous. Note also that the upper sign-continuity of $T$ corresponds to the upper sign property of $f_T$ (see  \cite{ACI}).
In a similar way to Proposition 3 in \cite{JC-JZ-2018}, 
we will show now that if $-T$ is pseudo-monotone, then $T$ is lower sign-continuous, and thus also upper sign-continuous.

\begin{proposition}\label{lower-sign}
Let $T:X\tos X^*$ be a set-valued map. 
If $-T$ is pseudo-monotone, then $T$ is lower sign-continuous on ${\rm int}({\rm dom}(T))$.
\end{proposition}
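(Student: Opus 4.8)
The plan is to unwind the hypothesis that $-T$ is pseudo-monotone into a concrete inequality relating the suprema and infima that appear in the definition of lower sign-continuity, and then to exploit the fact that $x$ is interior to $\dom(T)$ so that the shifted points $x_t=x+tv$ stay in $\dom(T)$ for small $t>0$.

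First I would translate the monotonicity hypothesis. By the stated equivalence between generalized monotonicity of a map and of its associated bifunction, applied to the map $-T$ whose bifunction is $f_{-T}(x,y)=\sup_{x^*\in T(x)}\langle x^*,x-y\rangle$, the pseudo-monotonicity of $-T$ reads: for all $a,b\in\dom(T)$,
\[
\sup_{a^*\in T(a)}\langle a^*,a-b\rangle\geq 0 \;\Longrightarrow\; \inf_{b^*\in T(b)}\langle b^*,a-b\rangle\geq 0.
\]
Here the passage from $-T$ to $T$ turns the defining implication $f_{-T}(a,b)\geq 0\Rightarrow f_{-T}(b,a)\leq 0$ into an antecedent about a supremum over $T(a)$ and a conclusion about an infimum over $T(b)$.

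Now fix $x\in\Int(\dom(T))$ and $v\in X$, and assume the premise of lower sign-continuity, namely $\inf_{x_t^*\in T(x_t)}\langle x_t^*,v\rangle\geq 0$ for all $t\in\,]0,1[$, where $x_t=x+tv$. The case $v=0$ is trivial, so assume $v\neq 0$. Since $x$ is interior to $\dom(T)$, there is $t_0\in\,]0,1[$ with $x_{t_0}\in\dom(T)$, hence $T(x_{t_0})\neq\emptyset$. For this $t_0$ the premise gives $\inf_{T(x_{t_0})}\langle\cdot,v\rangle\geq 0$, and non-emptiness of $T(x_{t_0})$ then also yields $\sup_{T(x_{t_0})}\langle\cdot,v\rangle\geq 0$; multiplying by $t_0>0$ this is exactly $\sup_{a^*\in T(x_{t_0})}\langle a^*,x_{t_0}-x\rangle\geq 0$, i.e. the antecedent of the displayed implication with $a=x_{t_0}$ and $b=x$. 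Applying that implication produces $\inf_{x^*\in T(x)}\langle x^*,x_{t_0}-x\rangle\geq 0$, and dividing by $t_0>0$ gives $\inf_{x^*\in T(x)}\langle x^*,v\rangle\geq 0$, which is the conclusion of lower sign-continuity at $x$. The upper sign-continuity then follows automatically, as observed before the statement.

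The only genuinely delicate point is the bookkeeping of the suprema and infima and of the sign of $a-b$ when passing from $-T$ to $T$: one must be certain that pseudo-monotonicity of $-T$ yields an $\inf\geq 0$ over $T(x)$ out of a $\sup\geq 0$ over $T(x_{t_0})$, and that nonnegativity of the pairing survives division by $t_0$. It is worth noting that the argument in fact uses the premise at a single value $t_0$ rather than for all $t$, the pseudo-monotonicity being strong enough to transfer the sign directly from $T(x_{t_0})$ to $T(x)$ without a limiting procedure. Interiority of $x$ in $\dom(T)$ is invoked exactly once, to guarantee an admissible $t_0$ with $T(x_{t_0})\neq\emptyset$; without it the premise could hold vacuously while the conclusion fails.
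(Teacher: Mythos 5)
Your proof is correct and is essentially the paper's argument in direct (contrapositive) form: both proofs apply the pseudo-monotonicity of $-T$ to the single pair $(x_{t_0},x)$ with $t_0$ small enough that interiority puts $x_{t_0}$ in $\dom(T)$, and then divide the resulting pairing by $t_0>0$. The only cosmetic differences are that the paper argues by contradiction and works with the operator inequalities directly rather than through the bifunction $f_{-T}$.
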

\begin{proof}
By contradiction, let us assume that $T$ is not lower sign-continuous at $x\in {\rm int}({\rm dom} (T))$, so that there exists $v\in X$ and $x^*\in T(x)$ such that $\langle x^*,v\rangle<0$ and 
\begin{equation}\label{eq:uscprop}
\forall\,t\in]0,1[,\,\inf_{x_t^*\in T(x_t)}\langle x_t^*,v\rangle\geq 0,
\end{equation}
where $x_t=x+tv$.
Let $t\in\ ]0,1[\ $ be small enough, so that $x_t=x+tv$ belongs to ${\rm dom} (T)$. Clearly, since $\langle x^*,v\rangle=\langle x^*,t^{-1}(x_t-x)\rangle<0$ then $\langle -x^*,x_t-x\rangle>0$. 
Now, take $x_t^*\in T(x_t)$. The pseudo-monotonicity of $-T$ implies $\langle -x_t^*,x-x_t\rangle<0$, which in turn implies $\langle x_t^*,v\rangle<0$. 
However, this is a contradiction with  \eqref{eq:uscprop}.
\end{proof}

Recently, in \cite{PQK-NHQ-19} the authors proposed a new existence result without the assumption of convexity of the constraint set but under cyclic quasi-monotonicity, which is in particular proper quasi-monotonicity on convex sets, due to Proposition 4.4 in \cite{HD1999}. 
Furthermore, important results about the existence of solution for variational inequality under convexity of the constraint set and generalized monotonicity (more specifically proper quasi-monotonicity and quasi-monotonicity) were established by Aussel and Hadjisavvas in \cite{AH04}.

We now introduce the finite intersection property for set-valued maps. A set-valued map $T:X\tos X^*$ is said to have the \emph{finite intersection property} on a subset $C$ of $X$ if,
for any finite subset  $A$ of $C$ there exists $x\in C$ such that
\[
\langle a^*,x-a\rangle\leq 0,~\forall a\in A,~\forall a^*\in T(a).
\]

\begin{proposition}\label{T-fip-to-fip}
Let $T:X\tos X^*$ be a set-valued map with weak$^*$ compact values, and $f_T$ be the bifunction defined as in \eqref{T-f}. 
$T$ has the finite intersection property if and only if $f_T$ has fip.
\end{proposition}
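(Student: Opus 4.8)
The plan is to unwind both definitions and to observe that they coincide termwise, so that the equivalence is purely a matter of rewriting a supremum. The key pointwise observation I would record first is this: for a fixed $a\in C$ and a fixed candidate point $x$, the inequality $f_T(a,x)\leq 0$ is, by the very definition of the supremum in \eqref{T-f}, equivalent to $\langle a^*,x-a\rangle\leq 0$ for every $a^*\in T(a)$. Indeed, $f_T(a,x)=\sup_{a^*\in T(a)}\langle a^*,x-a\rangle\leq 0$ holds precisely when $0$ is an upper bound of the set $\{\langle a^*,x-a\rangle:a^*\in T(a)\}$, that is, when each of its members is non-positive. Note that attainment of the supremum is not needed for this equivalence; the weak$^*$ compactness of the values $T(a)$ together with the weak$^*$ continuity of the linear functional $a^*\mapsto\langle a^*,x-a\rangle$ is invoked only to guarantee that the supremum is finite, so that $f_T$ is a genuine $\R$-valued bifunction and the comparison with $0$ is meaningful.

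With this pointwise equivalence in hand, the two properties match line by line. I would fix an arbitrary finite and non-empty subset $A$ of $C$. The statement that $f_T$ has the fip asks for a point $x\in C$ with $\max_{a\in A}f_T(a,x)\leq 0$, that is, $f_T(a,x)\leq 0$ for all $a\in A$. By the pointwise observation above this says exactly that $\langle a^*,x-a\rangle\leq 0$ for all $a\in A$ and all $a^*\in T(a)$, which is precisely the condition defining the finite intersection property of $T$ relative to the set $A$. Since the choice of $A$ is arbitrary and the quantifier ``for any finite subset'' is identical in both definitions, running this identification in both directions yields the equivalence.

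There is essentially no hard step here: the content of the proposition is a direct translation between the supremum formulation of $f_T$ and the ``for all $a^*\in T(a)$'' formulation in the definition of the finite intersection property for $T$. The only place where a standing hypothesis intervenes is the weak$^*$ compactness of the values of $T$, and its sole role is to ensure that $f_T$ takes finite values and hence qualifies as a bifunction into $\R$; it plays no part in the logical equivalence of the two inequalities, which holds term by term.
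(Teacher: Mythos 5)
Your proof is correct and matches the paper's approach: the paper simply states that the equivalence is a direct consequence of the definitions, and your write-up supplies exactly the intended unwinding, namely that $f_T(a,x)\leq 0$ holds if and only if $\langle a^*,x-a\rangle\leq 0$ for every $a^*\in T(a)$, after which the two finite-intersection conditions coincide verbatim. Your remark that weak$^*$ compactness is needed only to keep $f_T$ real-valued is a sensible clarification that the paper leaves implicit.
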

\begin{proof}
It is a direct consequence from definitions.
\end{proof}

By Proposition \ref{proper-fip}, if $T$ is properly quasi-monotone on a convex set $C$ then $T$ has the finite intersection property on $C$. 

An important class of set-valued maps with the finite intersection property are the adjusted normal operators, which were introduced by Aussel and Hadjisavvas in \cite{AuHad2005}. Indeed, this can be deduced from \cite[Proposition 4.4]{HD1999} and \cite[Proposition 3.3]{AuHad2005}, 

Finally, we have an existence result for variational inequality problems without convexity of the constraint set and under the finite intersection property.
\begin{corollary}\label{coro-vip}
Let $T:X\tos X^*$ be a set-valued map with weak$^*$ compact and non-empty values, and $C$ be a non-empty and compact subset of $X$. If $T$ satisfies the finite intersection property, and one of the following hold
\begin{enumerate}
\item $-T$ is pseudo-monotone,
\item $C$ is convex and $T$ has the upper sign continuity,
\end{enumerate}
  then $\vip(T,C)$ is non-empty.
\end{corollary}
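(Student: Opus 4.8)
The plan is to translate the problem into the equilibrium setting via the bifunction $f_T$ and then reuse the Minty machinery of Section~\ref{main1}. First I would recall that $\ep(f_T,C)=\vip(T,C)$, so that it suffices to show $\ep(f_T,C)\neq\emptyset$. Since $T$ has the finite intersection property on $C$, Proposition~\ref{T-fip-to-fip} yields that $f_T$ has the fip on $C$. As noted in the text, $f_T$ is automatically lower semi-continuous in its second argument, so each set $F_x=\{y\in C:~f_T(x,y)\leq0\}$ is closed in the compact set $C$ and therefore compact. Hence Lemma~\ref{meplemma} applies and gives $\mep(f_T,C)\neq\emptyset$; in other words the associated Minty variational inequality is solvable.

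It then remains to upgrade a Minty solution to a genuine one, that is, to prove $\mep(f_T,C)\subseteq\ep(f_T,C)$, and for this I would invoke the two alternatives of Proposition~\ref{EP-fip}. Under hypothesis (2), convexity of $C$ together with upper sign-continuity of $T$ is, as already recorded, nothing but the upper sign property of $f_T$ on $C$, so condition (2) of Proposition~\ref{EP-fip} holds. Under hypothesis (1), I would verify that pseudo-monotonicity of $-T$ forces pseudo-monotonicity of the bifunction $-f_T$, so that condition (1) of Proposition~\ref{EP-fip} applies, with no convexity of $C$ being needed here. Either way we obtain $\mep(f_T,C)\subseteq\ep(f_T,C)$, and combining this with the non-emptiness furnished by Lemma~\ref{meplemma} produces a point of $\ep(f_T,C)=\vip(T,C)$, which is the assertion.

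The delicate point is the implication in case (1): passing from $-T$ to $-f_T$ intertwines a supremum over the values of $T$ with a universal quantifier. Indeed $f_T(x,y)\leq0$ encodes $\langle x^*,y-x\rangle\leq0$ for every $x^*\in T(x)$, which is stronger than the inequality $\sup_{s\in-T(x)}\langle s,y-x\rangle\geq0$ expressing non-negativity of the ``$-T$'' bifunction at $(x,y)$; the argument must check that this stronger input still triggers the pseudo-monotonicity of $-T$ and then unwinds---again through a supremum-over-values step and the non-emptiness of $T(y)$---to the inequality $f_T(y,x)\geq0$. Alternatively one could route through Proposition~\ref{lower-sign}, but that statement presumes convexity and an interior point of $\dom(T)$, which case (1) does not provide, so the direct bifunction computation is preferable. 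Once this bookkeeping is in place the conclusion is immediate; the remaining items---closedness and compactness of the sets $F_x$, and the identification of case (2) with the upper sign property---are routine consequences of the results already at hand.
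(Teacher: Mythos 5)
Your proposal is correct and follows essentially the same route as the paper: pass to $f_T$, use Proposition \ref{T-fip-to-fip} to get the fip, apply Lemma \ref{meplemma} (with $F_x$ closed by lower semicontinuity of $f_T(x,\cdot)$ and compact since $C$ is compact) to solve the Minty problem, and then Proposition \ref{EP-fip} together with $\ep(f_T,C)=\vip(T,C)$ to conclude. The only difference is that you spell out the quantifier bookkeeping showing that pseudo-monotonicity of $-T$ yields pseudo-monotonicity of $-f_T$, which the paper dismisses as ``not difficult to see''; your verification is the right one.
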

\begin{proof} 
By Proposition \ref{T-fip-to-fip}, the bifunction $f_T$ defined as in \eqref{T-f} has the fip on $C$. Furthermore, it is not difficult to see that $-f_T$ is pseudo-monotone. Since $C$ is compact, Lemma \ref{meplemma} and Proposition \ref{EP-fip} ensures that $\ep(f_T,C)$ is non-empty. The conclusion follows from $\ep(f_T,C)=\vip(T,C)$.
\end{proof}


\subsection{Generalized Nash equilibrium problem}
A generalized Nash equilibrium problem (GNEP) consists of
$p$ players. Each player $\nu$ controls the decision variable $x^\nu\in C_\nu$, where $C_\nu$ is a non-empty and closed subset
of a topological space $X_\nu$. We denote by $x=(x^1,\dots,x^p)\in \prod_{\nu=1}^p C_\nu=C$ the vector formed by all these decision
variables and by $x^{-\nu}$ we denote the strategy vector of all the players different from player $\nu$, the \emph{rivals} of $\nu$. 
The set of all such vectors will be denoted by $C^{-\nu}$. We sometimes abuse of the notation by writing $x=(x^\nu,x^{-\nu})$ in order to emphasize player $\nu$-th's variables within $x$. 

Each player $\nu$ has an objective function $\theta_\nu:X=\prod_\nu^p X_\nu\to\R$ that depends on all player's strategies. Furthermore,
each players' strategy must belong to a set identified by the set-valued map $K_\nu:C^{-\nu}\tos C_\nu$ in the sense that the strategy space of the player $\nu$ is $K_\nu(x^{-\nu})$ and thus depends on the rivals' strategies $x^{-\nu}$.
Given the strategy vector $x^{-\nu}$ of rivals of player $\nu$, he (or she) chooses a strategy $x^\nu$ that solves the following optimization problem
\begin{align}\label{opt-nu}
\min_{x^\nu} \theta_\nu(x^\nu,x^{-\nu}),\mbox{ subject to }x^\nu\in K_\nu(x^{-\nu}).
\end{align}
Let  ${\rm Sol}_\nu(x^{-\nu})$ denote the solution set of the problem \eqref{opt-nu}. A {\em generalized Nash equilibrium} is a vector 
$\hat{x}$ such that $\hat{x}^\nu\in {\rm Sol}_\nu(\hat{x}^{-\nu})$, for any $\nu$.

An important situation is the case of joint constraints, which was introduced by Rosen in 1965 (see \cite{Rosen}) and has recently been considered in \cite{Aussel-Dutta,Facchinei2007,SN10}. 
This case is described with a non-empty subset $C$ of $X$ by letting the constraint set-valued maps be defined as
\begin{align}\label{Jointly}
K_\nu(x^{-\nu}):=\{z^\nu\in X_\nu:~(z^\nu,x^{-\nu})\in C\},
\end{align}
for any $\nu$ and $x=(x^\nu,x^{-\nu})\in C$.

Associated to generalized Nash equilibrium problem, we define the following bifunction
$f_0:X\times X\to\R$ as
\begin{align}\label{NI-dual}
f_0(x,y):=\sum_{\nu=1}^p\theta_\nu(y^\nu,y^{-\nu})-\theta_\nu(x^\nu,y^{-\nu})
\end{align}

It is important to note that  $g:X\times X\to\R$ defined as $g(x,y):=-f_0(y,x)$ is the well-known Nikaid\^o-Isoda function introduced in \cite{Nikaido-Isoda}. 

The following result says every solution of the Minty equilibrium problem is a solution of the generalized Nash equilibrium problem in the joint case. 
\begin{lemma}\label{Minty-Nash}
Let us assume, for any $\nu$ the subset $K_\nu(x^{-\nu})$ is defined as in \eqref{Jointly}.
Then every solution of $\mep(f_0,C)$ is a generalized Nash equilibrium.
\end{lemma}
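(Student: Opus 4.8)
The plan is to exploit the additively separable structure of $f_0$ under joint constraints by testing the Minty inequality only against \emph{unilateral deviations}, i.e. against points of $C$ that differ from the candidate solution in a single player's coordinate. Concretely, suppose $\bar{x}\in\mep(f_0,C)$, so that $\bar{x}\in C$ and $f_0(y,\bar{x})\leq 0$ for every $y\in C$. I would fix a player $\nu$ and an arbitrary feasible response $z^\nu\in K_\nu(\bar{x}^{-\nu})$; by the definition \eqref{Jointly} of the constraint maps this is precisely the statement that the unilateral deviation $y:=(z^\nu,\bar{x}^{-\nu})$ belongs to $C$, so $y$ is an admissible test point for the Minty inequality.

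First I would substitute this $y$ into $f_0(\cdot,\bar{x})$ using the definition \eqref{NI-dual}. Since $y$ agrees with $\bar{x}$ in every coordinate except the $\nu$-th, each summand indexed by $\mu\neq\nu$ contributes $\theta_\mu(\bar{x}^\mu,\bar{x}^{-\mu})-\theta_\mu(y^\mu,\bar{x}^{-\mu})=0$, because $y^\mu=\bar{x}^\mu$. Hence the whole sum collapses to the single $\nu$-th term
\[
f_0(y,\bar{x})=\theta_\nu(\bar{x}^\nu,\bar{x}^{-\nu})-\theta_\nu(z^\nu,\bar{x}^{-\nu}).
\]

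Then the Minty inequality $f_0(y,\bar{x})\leq 0$ immediately yields $\theta_\nu(\bar{x}^\nu,\bar{x}^{-\nu})\leq\theta_\nu(z^\nu,\bar{x}^{-\nu})$. As $z^\nu$ was an arbitrary element of $K_\nu(\bar{x}^{-\nu})$, and $\bar{x}^\nu\in K_\nu(\bar{x}^{-\nu})$ itself (which holds because $\bar{x}\in C$), this says exactly that $\bar{x}^\nu$ minimises $\theta_\nu(\cdot,\bar{x}^{-\nu})$ over $K_\nu(\bar{x}^{-\nu})$, i.e. $\bar{x}^\nu\in{\rm Sol}_\nu(\bar{x}^{-\nu})$. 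Since the player $\nu$ was arbitrary, $\bar{x}$ is a generalized Nash equilibrium.

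I do not expect a genuine obstacle here: the argument is a direct unwinding of the definitions once the correct test points are chosen. The only point requiring care is the bookkeeping of the rival indices in \eqref{NI-dual} — one must verify that substituting a unilateral deviation really annihilates every summand except the $\nu$-th. This rests on the fact that in $f_0(y,\bar{x})$ each objective is evaluated at the rivals' strategies $\bar{x}^{-\mu}$ taken from the \emph{second} argument, so that changing only $y^\nu$ leaves the $\mu$-th term ($\mu\neq\nu$) untouched; it is precisely this feature that makes the Minty formulation, rather than the direct equilibrium formulation, decompose so cleanly into the individual players' optimality conditions.
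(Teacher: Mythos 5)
Your proof is correct and follows essentially the same route as the paper's: test the Minty inequality at the unilateral deviation $y=(z^\nu,\bar{x}^{-\nu})$, observe that all summands of $f_0(y,\bar{x})$ with index $\mu\neq\nu$ vanish, and read off the optimality of $\bar{x}^\nu$ for each player. The only difference is that you spell out the cancellation and the feasibility $\bar{x}^\nu\in K_\nu(\bar{x}^{-\nu})$ explicitly, which the paper leaves implicit.
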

\begin{proof}
Let $\hat{x}$ be an element of $\mep(f_0,C)$. For each $\nu$ and any $y^\nu\in K_\nu(\hat{x}^{-\nu})$ we have 
\[
\theta_\nu(\hat{x})-\theta_\nu(y^\nu,\hat{x}^{-\nu})=f_0(y,\hat{x})\leq 0,
\]
where $y=(y^\nu,\hat{x}^{-\nu})\in C$, which in turn implies $\theta_\nu(\hat{x})\leq \theta_\nu(y^\nu,\hat{x}^{-\nu})$. The result follows.
\end{proof}

\begin{corollary}\label{Existence-Nash}
Assume that $C$ is  compact and non-empty and for any $\nu$ the subset $K_\nu(x^{-\nu})$ is defined as in \eqref{Jointly}. If $f_0$ defined as in \eqref{NI-dual} has the fip on $C$ and the set $F_x=\{y\in C:~f_0(x,y)\leq0\}$ is closed for all $x\in C$, then there exists a generalized Nash equilibrium.
\end{corollary}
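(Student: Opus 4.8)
The plan is to chain together the two lemmas established in this subsection, namely Lemma \ref{meplemma} and Lemma \ref{Minty-Nash}. The logical skeleton is: the hypotheses guarantee that $\mep(f_0,C)$ is non-empty, and any element of $\mep(f_0,C)$ is a generalized Nash equilibrium in the jointly constrained setting. Since the two lemmas do essentially all of the work, the proof reduces to checking that their hypotheses are met.

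First I would verify the conditions of Lemma \ref{meplemma} for the bifunction $f_0$ on $C$. The lemma requires that each $F_x$ be closed, that $f_0$ have the fip on $C$, and that at least one $F_x$ be compact. The first two are assumed directly. For the compactness requirement, I would observe that since $C$ is compact and each $F_x=\{y\in C:~f_0(x,y)\leq0\}$ is a closed subset of $C$, every $F_x$ is in fact compact as a closed subset of a compact space; in particular the ``some $F_x$ is compact'' hypothesis is automatic. Applying Lemma \ref{meplemma} then yields that $\mep(f_0,C)$ is non-empty.

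Next I would invoke Lemma \ref{Minty-Nash}, which asserts that in the joint-constraint case, where each $K_\nu(x^{-\nu})$ is given by \eqref{Jointly}, every solution of $\mep(f_0,C)$ is a generalized Nash equilibrium. Picking any $\hat{x}\in\mep(f_0,C)$ (which exists by the previous step) thus produces a generalized Nash equilibrium, completing the argument.

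As for the main obstacle: there is essentially none of substance here, since this is a direct corollary assembling the preceding results. The only points requiring care are the transfer of compactness from $C$ to the sets $F_x$ (so that Lemma \ref{meplemma} applies) and confirming that the jointly-constrained definition \eqref{Jointly} is precisely the hypothesis under which Lemma \ref{Minty-Nash} holds. Both checks are routine, so I would keep the written proof very short.
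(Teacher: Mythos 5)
Your proposal is correct and follows exactly the paper's route: the paper's proof is the one-line remark that the corollary follows from Lemma \ref{meplemma} and Lemma \ref{Minty-Nash}, and your verification that the compactness hypothesis of Lemma \ref{meplemma} is automatic (each $F_x$ being a closed subset of the compact set $C$) is the only detail the paper leaves implicit.
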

\begin{proof}
It follows from Lemma \ref{meplemma} and Lemma \ref{Minty-Nash}.
\end{proof}

We have that Corollary 4.6 in \cite{PQK-NHQ-19} is a direct consequence of Corollary \ref{Existence-Nash}, thanks to Proposition \ref{cyclic-q-fip} .

The next result establishes sufficient conditions to guarantee the fip$^{*}$ of $f_0$.
\begin{proposition}
Assume that each $X_\nu$ is a topological vector space and the set $C$ is convex.
If each objective function is continuous and convex with respect to the variable of its player,
then the bifunction $f_0$ defined as in \eqref{NI-dual} has fip$^*$ on $C$.
\end{proposition}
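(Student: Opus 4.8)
The plan is to reduce the statement to Proposition \ref{proper-fip} by showing that $f_0$ is properly quasi-monotone on $C$ and lower semi-continuous in its second argument, the former being the substantive step.

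First I would dispose of the continuity requirement. For fixed $x$, writing $f_0(x,y)=\sum_{\nu=1}^p\bigl(\theta_\nu(y^\nu,y^{-\nu})-\theta_\nu(x^\nu,y^{-\nu})\bigr)$, each summand is continuous in $y$, since $\theta_\nu$ is continuous and the coordinate projections $y\mapsto y^\nu$ and $y\mapsto y^{-\nu}$ are continuous. Hence $f_0(x,\cdot)$ is continuous, in particular lower semi-continuous, and every set $F_x$ is closed.

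The heart of the argument is proper quasi-monotonicity. Given a finite set $A=\{a_1,\dots,a_m\}\subset C$ and a point $x=\sum_{j=1}^m\lambda_j a_j\in\co(A)$ with $\lambda_j\ge 0$ and $\sum_j\lambda_j=1$, I would form the convex combination $\sum_{j=1}^m\lambda_j f_0(a_j,x)$ and show it is nonpositive. The key observation is that $x^\nu=\sum_j\lambda_j a_j^\nu$ in each block $\nu$, so the convexity of $\theta_\nu$ in its own player's variable, with the rivals' block $x^{-\nu}$ held fixed, yields $\theta_\nu(x^\nu,x^{-\nu})\le\sum_j\lambda_j\,\theta_\nu(a_j^\nu,x^{-\nu})$. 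Summing these inequalities over $\nu$ gives precisely $\sum_j\lambda_j f_0(a_j,x)\le 0$; since the $\lambda_j$ are a convex weighting, at least one index $j$ must satisfy $f_0(a_j,x)\le 0$, that is $\min_{a\in A}f_0(a,x)\le 0$. This is the proper quasi-monotonicity of $f_0$ on $C$.

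Finally I would invoke Proposition \ref{proper-fip} to pass from proper quasi-monotonicity (plus the lower semi-continuity above) to fip$^*$. The one point requiring care is that Proposition \ref{proper-fip} is stated over a normed space, whereas here each $X_\nu$ is merely a topological vector space. This mismatch is harmless: fip$^*$ only asserts the existence of a suitable point in $\co(A)$, and $\co(A)$, being the convex hull of finitely many points, lies in a finite-dimensional subspace, which is normable and on which each $f_0(a_j,\cdot)$ restricts continuously. One may therefore run the argument of Proposition \ref{proper-fip} (equivalently, apply the finite-dimensional KKM lemma to the closed sets $F_{a_j}\cap\co(A)$, whose covering condition is exactly the proper quasi-monotonicity just established) on the compact convex set $\co(A)$, obtaining $x\in\bigcap_j F_{a_j}\cap\co(A)$, which is the desired fip$^*$ point. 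I expect the only genuine subtlety to be identifying the correct summation identity that couples the convex weights in $x=\sum_j\lambda_j a_j$ with the per-player convexity of each $\theta_\nu$; the normed-space reduction is a routine technicality once one notes that everything happens inside the finite-dimensional set $\co(A)$.
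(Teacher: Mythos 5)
Your proof is correct and follows essentially the same route as the paper: establish that $f_0$ is properly quasi-monotone (the paper cites Proposition 1.1 of Bianchi--Pini, using that $f_0(\cdot,y)$ is concave and $f_0$ vanishes on the diagonal, whereas you verify the convex-combination inequality $\sum_j\lambda_j f_0(a_j,x)\le 0$ directly, which is the same mechanism) and then invoke Proposition \ref{proper-fip}. Your additional care about the normed-space hypothesis of Proposition \ref{proper-fip} versus the topological-vector-space setting here, resolved by working inside the finite-dimensional compact set $\co(A)$, addresses a point the paper silently glosses over.
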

\begin{proof}
It is clear $-f_0(\cdot,y)$ is (quasi) convex and $f_0$ vanishes on the diagonal of $X\times X$. By Proposition 1.1 in \cite{BP01}, we deduce $f_0$ is properly quasi-monotone. Since $f_0$ is continuous, the result follows from Proposition \ref{proper-fip}.
\end{proof}

\begin{remark}
An important instance (see \cite{PQK-NHQ-19}) where the bifunction $f$ is cyclically quasi-monotone is when each objective function $\theta_\nu$ has separable variables, that is, it can be written as $\theta_\nu(x^\nu,x^{-\nu}):=f_\nu(x^\nu)+g_\nu(x^{-\nu})$. Indeed, this follows from writing
\[
f(x,y)=\sum_{\nu=1}^p f_\nu(y^\nu)-f_\nu(x^\nu)=\varphi(y)-\varphi(x),
\]
where $\varphi(z)=\sum_{\nu=1}^p f_\nu(z^\nu)$, and Example \ref{Qopt}. 
\end{remark}

\bibliographystyle{abbrv}


\end{document}